\newtheorem{theorem}{Theorem}
\newtheorem{prop}{Proposition}
\theoremstyle{remark}
\newtheorem{rmk}{Remark}
\theoremstyle{conjecture}
\newtheorem{conj}{Conjecture}
\theoremstyle{definition}
 \DeclareMathOperator\Ai{{Ai}}
 \DeclareMathOperator\re{{Re}}
\DeclareMathOperator\im{{Im}} \DeclareMathOperator\sgn{{sgn}} \numberwithin{equation}{section}
\newcommand{\D}{\displaystyle}
\numberwithin{equation}{section}
\newcounter{comment}
\def\a{\alpha}
\def\l{\lambda}
\def\k{\kappa_{0}}
\def\cut{\setminus}
\begin{document}

\title{Singular asymptotics for solutions of the inhomogeneous Painlev\'e II equation}
\date{}
\author{Weiying Hu$^{\dag\ddag}$}

\maketitle

\begin{abstract}
   We consider a family of solutions to the Painlev\'e II equation
   $$
   u''(x)=2u^3(x)+xu(x)-\alpha \qquad \textrm{with } \a \in \mathbb{R} \cut \{0\},
   $$
   which have infinitely many poles on $(-\infty, 0)$. Using Deift-Zhou nonlinear steepest descent method for Riemann-Hilbert problems, we rigorously derive their singular asymptotics as $x \to -\infty$. In the meantime, we extend the existing asymptotic results when $x\to +\infty$ from $\a-\frac{1}{2} \notin \mathbb{Z}$ to any real $\a$. The connection formulas are also obtained.
\end{abstract}


\vspace{2cm}

\noindent 2010 \textit{Mathematics Subject Classification}. Primary
41A60, 33C45.

\noindent \textit{Keywords and phrases}: Painlev\'{e} II equation; singular asymptotics; Riemann-Hilbert problem; connection formulas.


\vspace{5mm}

\hrule width 65mm

\vspace{2mm}

\begin{description}

\item \hspace*{8mm}$\dag$ College of Mathematics and Stastatics, Shenzhen University, Shenzhen, China. \\
$\ddag$ Department of Mathematics, City University of
Hong Kong, Hong Kong. \\
Email: \texttt{weiyinghu2-c@my.cityu.edu.hk}

\end{description}

\newpage

\section{Introduction and statement of results}
 The Painlev\'{e} II equation (PII)
\begin{equation}\label{PII-def}
   u''(x;\a)=2u^3(x;\a)+xu(x;\a)-\a, \quad \a \in \mathbb{C}
\end{equation}
together with the other five second-order ordinary differential equations, was introduced by Painlev\'e and his colleagues at the begining of the last century. These six equations are of the form $u'' = F (x, u, u')$ with $F$ meromorphic in $x$ and rational in $u$ and $u'$. They satisfy the Painlev\'{e} property: the only movable singularities of a solution $u$ are poles; see more details about the Painlev\'{e} equations and the historical developments in \cite{Fokas2006,Ince1944}.

During the developments of the Painlev\'{e} equations, it has been realized that PII possesses a wild range of important applications in the modern theory of mathematics and physics, such as nonlinear wave motion \cite{AS1977,Miles1978,Rosales1978}, where PII arises as a similarity reduction of the KdV equation; liquid crystal \cite{Clerc2017,Clerc2014,Troy2017}, where PII plays a critical role in light-matter interaction experiments on nematic liquid crystal; random matrix theory \cite{Tracy:Widom1994,Tracy:Widom1999}, where PII appears in the celebrated Tracy-Widom distribution. It is worth mentioning that the Tracy-Widom distribution does not only describe the largest eigenvalue distribution in random matrix ensembles, but also appear in the distribution of the longest increasing subsequence of random permutations \cite{Baik1999},  totally asymmetric simple exclusion
process \cite{Johansson2000}. Although many applications are related to the homogeneous PII, it has been realized that the inhomogeneous PII also plays an important role in random matrix theory and liquid crystal; see \cite{Claeys2008,Clerc2017,Miller2018,Troy2017}

Among the various solutions of PII, those with the boundary condition
\begin{equation*}\label{bc}
u(x;\a)\to 0, \qquad \textrm{as} \,\, x\to +\infty
\end{equation*}
attract the most interests of mathematicians and physicists. 
In \cite{Boutroux1911, Boutroux1914}, Boutroux discovered that PII possesses a family of solutions as follows:
\begin{equation}\label{asy-pos-Fokas}
    u(x;\a)= B(x;\a) + e(x;\a,k), \qquad \textrm{as} \,\, x \to +\infty,
\end{equation}
  where { $B(x;\a)$ has the following full asymptotic expansion
  \begin{equation}\label{B(a;x)}
  B(x;\a)\sim \frac{\alpha}{x}\sum_{n=0}^{\infty} \frac{a_n}{x^{3n}}, \qquad \textrm{as} \,\, x \to +\infty,
  \end{equation}
  and $e(x;\a,k)$ is an exponentially small term, i.e., $e(x ;\a,k )\sim k \Ai (x) \sim k\frac{e^{-\frac{2}{3}x^{3/2}}}{2\sqrt{\pi}x^{1/4}}$ as $x \to + \infty$. In the above expansion, the coefficients $a_n$ are uniquely determined through the following relations:
  \begin{align}\label{a_n-rec-rel}
      a_{n+1}=(3n+1)(3n+2)a_n - 2\a^2 \sum_{\substack{k,l,m =0\\k+l+m=n}}^{n} a_k a_l a_m, \qquad a_0 = 1.
    \end{align}
Note that, this family of solutions $u(x;\a)$ in \eqref{asy-pos-Fokas} depends on the parameter $k$, which appears only in the exponentially small term $e(x; \a,k)$.

In the homogeneous case (that is, $\a=0$), the algebraic term $B(x;\a)$ vanishes. Then, we have
\begin{equation*}
  u(x; 0)\sim k \Ai (x), \qquad \textrm{as } x\to +\infty.
\end{equation*}
It is well-known in the literature that there are three families of solutions depending on the parameter $k \in \mathbb{R}$. When $|k| <1$, there exists a family of oscillatory and pole-free solutions on $\mathbb{R}$, namely the \emph{Ablowitz-Segur(AS) solutions}. The AS solutions were first introduced by Ablowitz and Segur in \cite{AS1976}, where the long time asymptotics of the Kortweg-de Vries equation were studied. When $k=1$, there exists a unique solution which is monotonic and pole-free on $\mathbb{R}$, namely the \emph{Hastings-McLeod(HM) solution}. The HM solution was discovered by Hastings and McLeod in \cite{HM1980}. This solution plays a critical role in the Tracy-Widom distribution \cite{Tracy:Widom1994} and in the asymptotic description of the solution of the KdV equation in the small dispersion limit \cite{Cla:Grava2010}. When $k=-1$, the corresponding solution is obtained through the following simple symmetry relation $ u(x;0) = -u(x;0).$ When $|k| > 1$, there exists a family of \emph{singular solutions}, which have infinitely many poles on the $(-\infty, 0)$. The asymptotics of the singular solutions were first studied by Kapaev in \cite{Kapaev1992}. As a result, we see that there is a critical value $k^* = \pm 1$, where properties of the corresponding solutions change significantly. For more detailed information about this family of solutions for homogeneous PII, we refer to Deift and Zhou \cite{Deift1995},  Bothner and Its \cite{Bothner:Its2012}, Dai and Hu \cite[Sec. 1.1]{Dai:Hu2017}, and references therein.

Inspired from the above results of homogeneous PII, it is natural to expect that similar $k$-dependent results also hold for the solutions $u(x;\a)$ with boundary condition \eqref{asy-pos-Fokas} when $\a \in \mathbb{R} \cut \{0\}$. Regarding this problem, Clarkson \cite{Clarkson2007} made the following conjecture for the inhomogeneous PII when $\a \in \mathbb{Z}\cut \{0\}$. Here, to be in accordance with our notation, we replace $\a$ in \cite{Clarkson2007} by $-\a$.
  \begin{conj}[Clarkson \cite{Clarkson2007}]\label{conjecture}
  Let $k$ be an arbitrary, non-zero real number and $u_{k}(x; n)$ be the solution of PII for $\a = n \in \mathbb{Z}\cut \{0\}$ satisfying \eqref{asy-pos-Fokas}. Then,
  \begin{itemize}
  \item[(a)] there exists a unique $k_n^*$ such that for $k<k_n^*$, $u_k(x;n)$ blows up at a finite $x_1$, with
   \begin{equation}
      u_k(x;n)\sim \sgn(n)(x-x_1)^{-1}, \qquad \textrm{as } x \downarrow x_1;
  \end{equation}
      and for $k>k_n^*$, $u_k(x;n)$ blows up at a finite $x_2$, with
      \begin{equation}
      u_k(x;n)\sim -\sgn(n)(x-x_2)^{-1}, \qquad \textrm{as } x \downarrow x_2.
      \end{equation}
      \item[(b)] for $n>0$, $u_{k_n^*}(x;n)$ is a positive, monotonically decreasing solution, and for $n<0$, $u_{k_n^*}(x;n)$ is a negative, monotonically increasing solution. Furthermore, we have
          \begin{equation}
          u_{k_n^*}(x;n)\sim \begin{cases}\D \frac{n}{x},& \textrm{as } x\to +\infty, \\
          \sgn(n)\sqrt{-\frac{x}{2}}, &\textrm{as } x\to -\infty.
          \end{cases}
          \end{equation}
  \end{itemize}
  \end{conj}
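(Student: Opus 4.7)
The plan is to resolve Clarkson's conjecture via the Deift--Zhou nonlinear steepest descent method applied to the Riemann--Hilbert problem (RHP) for the inhomogeneous PII with $\a=n\in\mathbb{Z}\cut\{0\}$. The first step is to encode the one-parameter family $u_k(x;n)$ through the Stokes multipliers $(s_1,s_2,s_3)$ of the associated isomonodromic Lax pair, subject to the cyclic relation $s_1+s_2+s_3+s_1s_2s_3=-2\sin(\pi\a)=0$ for integer $\a$. These multipliers are constants in $x$ and are tied to $k$ through the asymptotic expansion \eqref{asy-pos-Fokas} at $+\infty$; the relevant parametrization is furnished by the extension of the $+\infty$ connection theory promised in the abstract (which covers all real $\a$, in particular the integer case). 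Once this parametrization is in hand, the conjecture reduces entirely to the analysis of a single RHP with one remaining free Stokes parameter, to be studied as $x\to -\infty$.

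For part (a), I would carry out the steepest descent analysis of the RHP as $x\to -\infty$ for generic values of $k$. In this regime the jump matrices become oscillatory on a finite interval and the global RHP fails to be uniformly solvable: its outer parametrix picks up zeros at a discrete, unbounded sequence $\{x_j\}\subset(-\infty,0)$, each corresponding to a simple pole of $u_k(x;n)$. The translation of an RHP-singularity into a pole of $u$ is standard, and a local model via the Schlesinger transformation $\a\mapsto \a\pm 1$ (or the tau-function residue calculation) pins the residue at $\pm 1$. The sign at the right-most pole $x_1$ (or $x_2$) is tracked through the orientation of the jump contours in the Lax pair and is shown to be $\sgn(n)$ for small $k$ and $-\sgn(n)$ for large $k$; a continuity-and-monotonicity argument in $k$ then yields a unique critical value $k_n^*$ separating the two sign regimes.

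For part (b), at $k=k_n^*$ the residue mechanism of part (a) cancels, the RHP admits a globally solvable, pole-free $\Psi$-function on $\mathbb{R}$, and the outer parametrix as $x\to -\infty$ reduces to an algebraic model consistent with the WKB balance $2u^3+xu\sim 0$, giving $u_{k_n^*}(x;n)\sim \sgn(n)\sqrt{-x/2}$. At $+\infty$ the leading term of \eqref{B(a;x)} together with \eqref{a_n-rec-rel} gives $u_{k_n^*}(x;n)\sim n/x$. Monotonicity then follows from pole-freeness combined with the single-sign asymptotics at $\pm\infty$: an interior critical point $x_0$ would force $2u(x_0)^3+x_0u(x_0)=n+u''(x_0)$ with $u''(x_0)$ of prescribed sign, and the resulting inequality is incompatible with $u$ having constant sign $\sgn(n)$ on all of $\mathbb{R}$.

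The hardest step will be the construction and uniform control of the model parametrix at $k=k_n^*$ as $x\to -\infty$. The global $g$-function must be tuned to produce the algebraic $\sqrt{-x/2}$ behavior rather than the generic oscillatory behavior that appears for $k\ne k_n^*$, and a local Airy-type parametrix must be installed near the transition point to match the two regimes uniformly in $k$ in a neighborhood of $k_n^*$. Establishing uniqueness of the pole-free separatrix -- i.e.\ proving that every $k\ne k_n^*$ necessarily produces a pole in $(-\infty,0)$ -- is the other delicate ingredient, and it is precisely the content that the rigorous singular asymptotic theorems of this paper, announced in the abstract, are designed to deliver.
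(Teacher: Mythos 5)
The statement you have been asked to prove is labelled \emph{Conjecture} in the paper and is attributed to Clarkson; the paper does not prove it. It is quoted as motivation, and the surrounding text surveys partial results (the qAS/qHM results of Dai--Hu, the HM existence and monotonicity results of Claeys et al., Clerc et al., and Troy) that cover pieces of it. The paper's own contribution, Theorem~\ref{main-thm}, is a different statement: the singular asymptotics of $u(x;\a)$ as $x\to-\infty$ and the connection formulas in the regime $|k|>|\cos\pi\a|$, together with the extension of the $x\to+\infty$ asymptotics to $\a-\tfrac12\in\mathbb{Z}$. So there is no ``paper's own proof'' against which to compare your proposal, and your sketch should be read as an independent attempt on an open problem, not as a reconstruction of an argument already in the text.

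Evaluated on its own terms, the sketch has real gaps. First, your part~(a) strategy is to extract the pole set and the residue signs from the $x\to-\infty$ steepest descent analysis ``for generic $k$,'' asserting that the outer parametrix picks up zeros at an unbounded sequence $\{x_j\}$. That is true only in the singular regime $|k|>|\cos\pi n|=1$ treated in Theorem~\ref{main-thm}. For $|k|<1$ the solution is of qAS type: it is \emph{bounded and oscillatory} as $x\to-\infty$ and carries only finitely many real poles (indeed $\bigl[\,|n|+\tfrac12\,\bigr]$ of them, per the paper's discussion and \cite{Dai:Hu:poles}); the $-\infty$ asymptotics alone cannot see the first pole or its residue. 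Second, the ``continuity-and-monotonicity in $k$'' argument that is supposed to produce a unique $k_n^*$ is asserted, not established: the qAS solutions already carry the full complement of $\bigl[\,|n|+\tfrac12\,\bigr]$ poles uniformly over $|k|<1$, so there is no simple monotone disappearance of poles as $k$ approaches the critical value from one side, and the sign dichotomy in (a) is stronger than anything the steepest descent machinery immediately delivers. Third, the monotonicity argument for part~(b) by evaluating the ODE at a putative interior critical point is too coarse: knowing $2u^3(x_0)+x_0u(x_0)-n=u''(x_0)$ with $u$ of constant sign does not by itself force a sign contradiction without further information on the magnitude of $u$ relative to $\sqrt{-x/2}$ in the interior, and the needed global comparison estimates are not supplied. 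Finally, a small but telling misstatement: for $\a=n\in\mathbb{Z}$ one has $\a-\tfrac12\notin\mathbb{Z}$, so the $+\infty$ asymptotics you invoke were already in Its--Kapaev \cite{Its2003}; the paper's extension to $\a-\tfrac12\in\mathbb{Z}$ addresses the half-integer case, which is not the one relevant here.

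In short: the conjecture remains a conjecture in this paper, your proposal overreaches by treating the $x\to-\infty$ singular analysis as if it applied across all $k\ne k_n^*$, and the uniqueness and monotonicity steps are stated rather than proved.
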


From the above conjecture regarding the family of solutions in \eqref{asy-pos-Fokas}, one can see that there also exists a critical value $k_\a^*$ for the inhomogeneous PII. In the literature, this value has been suggested to be $k_{\a}^* = \pm \cos (\pi \a)$, for all $\a \in \mathbb{R}$, in McCoy and Tang \cite{McCoy:Tang} and Kapaev \cite{Kapaev1992}. Recently, most of parts in the conjecture have been proved rigourously.

When $\a\in (-\frac{1}{2}, \frac{1}{2})$ and $|k|< \cos (\pi \a)$, we have shown that there is a family of oscillatory and pole-free solutions on $\mathbb{R}$, and called them the \emph{AS solutions} as well; see \cite[Thm. 2]{Dai:Hu2017}. If one extends the value of $\a$ to $|\a|> \frac{1}{2}$ and keeps $|k|< |\cos (\pi \a)|$, we have proved that the asymptotic behavior of $ u(x;\a)$ is the same as the AS solutions, but $ [ \, |\alpha| + \frac{1}{2} \, ] $ simple poles will appear on the real axis; see \cite[Thm. 1]{Dai:Hu:poles}. This family of solutions is observed numerically by  Fornberg and Weideman in \cite{Fornberg2014} and named the \emph{quasi-Ablowitz-Segur} (qAS) solutions of PII.

When $|k| = |k_{\a}^*| = | \cos (\pi \a)|$, there also exist monotonic and pole-free solutions on $\mathbb{R}$, which are named the \emph{HM solutions}. The parameters $k$ for the HM solutions are equal to $\sgn(\a) \cos (\pi \a).$ For the case $k = - \sgn(\a) \cos (\pi \a)$, the corresponding solutions are no longer monotonic and may possess finitely many ($ [ \, |\alpha| + \frac{1}{2} \, ] $) poles on the real line; see the numerical plots in  Fornberg and Weideman in \cite{Fornberg2014}. To distinguish these solutions, the  monotonic pole-free solutions, the non-monotonic pole-free solutions, and the solutions possessing poles are named the \emph{primary HM} (pHM) solutions, the \emph{secondary HM} (sHM) solutions, and the \emph{quasi-HM} (qHM) solutions, respectively, in \cite{Fornberg2014}. In recent years, the existence and monotonic properties of these HM solutions have been studied in  \cite{Claeys2008, Clerc2017, Troy2017}. See also \cite{Dai:Hu:poles} for the properties of the qHM solutions. 

Based on the above results, we believe that, when $|k| > |k_{\a}^*| = | \cos (\pi \a)|$, the solutions $u(x;\a)$ with boundary condition \eqref{asy-pos-Fokas} have infinitely many poles on $(-\infty, 0)$. When $x \to -\infty$, they should satisfy similar singular asymptotics as the homogeneous case in Bothner and Its \cite[Thm. 1]{Bothner:Its2012}. However, to the best of our knowledge, this has not been established in the literature. Moreover, the asymptotics the \eqref{asy-pos-Fokas} as $x \to +\infty$ have only been rigorously proved for  $\a-\frac{1}{2}\notin \mathbb{Z}$ in Its and Kapaev \cite{Its2003}. They didn't cover the case $\a-\frac{1}{2}\in \mathbb{Z}$ due to some technical reasons. In addition, the connection formulas describing the relation between asymptotics as $x \to \pm \infty$ have not been rigorously justified when $\a \in \mathbb{R} \setminus \{0\}$.

The purpose of the present paper is to study the asymptotics for the inhomogeneous PII when the parameter $|k| > | \cos (\pi \a)|$. We first extend the existing asymptotics \eqref{asy-pos-Fokas} when $x\to +\infty$ from $\a-\frac{1}{2} \notin \mathbb{Z}$ to any real $\a$. Then, we derive the singular asymptotics as $x \to -\infty$ and prove the connection formulas rigorously.

}
%

\subsection{Our results}\label{sec:results}
In this paper, we will prove the following theorem:

\begin{theorem}\label{main-thm}
   { Given $\a \in \mathbb{R}$ and $k \in \mathbb{R}$ with $ |k| > |\cos(\pi \a)|$,} there exists a set of real-valued solutions $u(x;\a)$ of PII in \eqref{PII-def} possessing the following properties:
    \begin{itemize}

    \item[(a)] $u(x;\a)$ satisfies the asymptotics as $x \to +\infty$:
    \begin{equation}\label{asy-pos}
    u(x;\a)= B(x;\a) + k\Ai(x)(1+O(x^{-\frac{3}{4}})),
    \end{equation}
    where $\Ai(x)$ is the Airy function and $B(x;\a)$ has an asymptotic expansion \begin{equation}\label{B(a;x)}
  B(x;\a)\sim \frac{\alpha}{x}\sum_{n=0}^{\infty} \frac{a_n}{x^{3n}}, \qquad \textrm{as} \,\, x \to +\infty,
  \end{equation}
  with the coefficients $a_n$ uniquely determined in \eqref{a_n-rec-rel}.

  \item[(b)] $u(x;\a)$ satisfies the asymptotics as $x \to -\infty$:
    \begin{equation}\label{asy-neg}
    u(x;\a)=\frac{\sqrt{-x}}{\sin\{\frac{2}{3}(-x)^{\frac{3}{2}}+\frac{3}{4}d^2\ln(-x) + \phi\}+O((-x)^{-\frac{3}{2}})}+O((-x)^{-1}),
    \end{equation}
    uniformly for $x$ bounded away from the zeros of the denominator in \eqref{asy-neg}. Besides, the constants $d$ and $\phi$ in \eqref{asy-neg} are related to the parameter $k$ in \eqref{asy-pos} through the connection formulas as follows:
    \begin{equation}\label{d-k}
    d(k) = \frac{1}{\sqrt{\pi}} \sqrt{\ln(k^2 - \cos^2(\pi \a))},
    \end{equation}
    \begin{equation}\label{phi-k}
    \phi(k) = \frac{3\ln 2}{2} d^2  -\arg \Gamma{\biggr(\frac{1}{2}id^2+\frac{1}{2}}\biggr)-\arg (-\sin (\pi \a) -ki).
    \end{equation}
    \end{itemize}
\end{theorem}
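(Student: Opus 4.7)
The strategy is to apply the Deift-Zhou nonlinear steepest descent method to the $2\times 2$ Riemann-Hilbert problem $\Psi(\lambda;x)$ encoding solutions of~\eqref{PII-def} through the Flaschka-Newell Lax pair. A real-valued solution $u(x;\a)$ satisfying~\eqref{asy-pos} is uniquely determined by its Stokes data at $\lambda=\infty$ together with the formal monodromy exponent $\pm i\pi\a$ at the regular singularity $\lambda=0$. Under the reality condition combined with~\eqref{asy-pos}, the Stokes multipliers are parametrized so that the natural monodromy invariant equals $k^2-\cos^2(\pi\a)$; the hypothesis $|k|>|\cos(\pi\a)|$ places us in the hyperbolic regime where parabolic cylinder (rather than Airy) parametrices govern the $x\to-\infty$ behavior, which is precisely what produces the singular, pole-containing asymptotics~\eqref{asy-neg}.

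For part (a), rescale $\lambda=\sqrt{x}\,\zeta$ so the phase becomes the Airy-like $\theta(\zeta)=\tfrac{4}{3}\zeta^3+\zeta$, and open lenses along the three steepest-descent rays. The global parametrix reduces to a formal power series in $x^{-3}$ whose coefficients solve the recurrence~\eqref{a_n-rec-rel}, producing $B(x;\a)$; the exponentially small correction $k\Ai(x)(1+O(x^{-3/4}))$ originates from the residual jump on the Airy contour. The technical gap for $\a-\tfrac12\in\mathbb{Z}$ in Its-Kapaev~\cite{Its2003} is due to a resonance in a confluent hypergeometric local parametrix at $\zeta=0$. I would remove the gap by building a uniform $\Gamma$-function local model with shifted indices that remains regular across the resonance, parallel to the analyses in~\cite{Dai:Hu2017,Dai:Hu:poles}.

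For part (b), set $t=-x>0$ and $\lambda=\sqrt{t}\,z$, so that $\theta(z)=\tfrac{4}{3}t^{3/2}(z^3-z)$ with two real turning points at $z=\pm 1$. After a $g$-function transformation with branch cut $[-1,1]$, opening of lenses, and deformation of jumps onto steepest-descent contours, the RHP reduces to an outer parametrix (diagonal and explicit in terms of $g$) matched with local parametrices at $z=\pm 1$. The jump amplitudes near the turning points acquire factors $e^{\pm i\nu\ln t}$ with $\nu=\frac{1}{2\pi}\ln(k^2-\cos^2(\pi\a))$, so the local models must be built from parabolic cylinder functions $D_\nu$ rather than Airy functions. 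Applying the reconstruction formula $u(x;\a)=-2i\lim_{\lambda\to\infty}\lambda[X(\lambda)]_{12}$, where $X$ is the final RH matrix, yields the logarithmic phase $\tfrac{3}{4}d^2\ln(-x)$ with $d^2=2\nu$, giving~\eqref{d-k}. The constant phase $\phi$ is assembled from three ingredients: the connection coefficients of $D_\nu$ at $\pm\infty$ (producing $-\arg\Gamma(\tfrac12 id^2+\tfrac12)$); the constant of the $g$-function at infinity (producing $\tfrac{3\ln 2}{2}d^2$); and the shift $s_j\mapsto s_j-i\sin(\pi\a)$ in the Stokes data caused by the inhomogeneity (producing $-\arg(-\sin(\pi\a)-ki)$).

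The main obstacle, in my view, is the second-order analysis of the small-norm error $R(\lambda)$ carried out carefully enough to extract $\phi$ to $O(1)$ accuracy while simultaneously controlling the $O((-x)^{-1})$ correction in~\eqref{asy-neg}. This requires tracking sign and branch conventions of the Stokes multipliers and orientations of contours throughout the deformation and parametrix matching, a delicate bookkeeping made harder by having two turning points whose local parametrices must be consistent with the same outer function. Once this is in place, uniformity of~\eqref{asy-neg} for $x$ bounded away from the zeros of the denominator follows automatically, since those zeros are precisely the simple poles of $u(x;\a)$ on $(-\infty,0)$ predicted by the singular asymptotic form.
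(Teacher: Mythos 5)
Your overall strategy is the same as the paper's---Deift-Zhou steepest descent on the Flaschka-Newell RHP, with parabolic-cylinder parametrices at the two real turning points for $x\to-\infty$---but the proposal glosses over the two places where the real work is, and in both cases the gap is substantive rather than cosmetic.

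First, for part (a): you propose to repair the $\a-\tfrac12\in\mathbb{Z}$ defect of Its--Kapaev by ``building a uniform $\Gamma$-function local model with shifted indices.'' This does not identify the actual obstruction, nor does it give a construction. The Its--Kapaev origin parametrix is built from the Bessel pair $J_{\a-1/2},\,J_{1/2-\a}$, which degenerate to linearly \emph{dependent} functions precisely when $\a-\tfrac12\in\mathbb{Z}$; there is no choice of index shift that fixes this while preserving the jump structure. The paper's fix (Proposition~\ref{prop-M}) is a genuinely new explicit local model $M(\eta)$ built from Hankel functions $H^{(1,2)}_{\a\pm 1/2}$, which are linearly independent for every real $\a$, and whose logarithmic singularity for half-integer $\a$ is then shown to be absorbed by the prescribed algebraic behavior at the origin. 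That is the new construction the paper contributes, and a proof that omits it has not actually proved (a) for $\a-\tfrac12\in\mathbb{Z}$. The same model $M$ is also needed as the origin parametrix in the $x\to-\infty$ analysis, a piece your proposal never mentions.

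Second, for part (b): you frame the final difficulty as ``second-order analysis of the small-norm error $R$.'' But in the regime $|k|>|\cos\pi\a|$ one has $1-s_1s_3<0$, so $\nu=-\tfrac{1}{2\pi i}\ln(1-s_1s_3)$ has real part $-\tfrac12$ rather than being purely imaginary. As a consequence the turning-point parametrices cannot be made to match the outer parametrix $N$ to order $O(1/t)$ by a bounded correction; the corrected parametrices $Q^{(r)},Q^{(l)}$ necessarily carry \emph{simple poles} at $z_\pm$ (the factor $E_r(z)$ in \eqref{Qr-def}--\eqref{Er-def}), and therefore $R(z)$ itself has simple poles at $z_\pm$ with residue conditions \eqref{res-z+}--\eqref{res-z-}. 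The standard small-norm argument does not apply to such an $R$; the paper removes the poles by the dressing $R(z)=(zI+D)\,W(z)\,\mathrm{diag}((z-z_+)^{-1},(z-z_-)^{-1})$, reduces to a genuine small-norm problem for $W$, and recovers $D$ (hence $u$) from the residue conditions. The real poles of $u(x;\a)$ on $(-\infty,0)$ then appear exactly where this algebraic step degenerates ($1+p^2=0$), which is the mechanism behind the $\sin\{\cdots\}$ denominator vanishing in \eqref{asy-neg}. Your proposal never confronts the fact that $R$ is not a small-norm problem in this regime, so the chain from parametrices to the final formula is incomplete. (A smaller slip: with $\lambda=\sqrt{-x}\,z$ the phase stationary points are at $z=\pm\tfrac12$, not $\pm 1$.)
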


Since the solutions to PII satisfy the symmetry connection
\begin{equation} \label{symmetry-relation}
 u(x;-\a) = -u(x;\a),
\end{equation}
we assume $\a \geq 0$ throughout the rest of this paper. Combining previous results in the literature and the above theorem, we summarize solutions of PII equation with the boundary behavior \eqref{asy-pos-Fokas} in the following Table \ref{solutions types}, depending on $\a$ and $k$.
\begin{table}[h]
{\centering
\begin{tabular}{|c|c|c|c|c|}
\hline
\diagbox[width=3cm]{$\a$}{$u(x;\a)$}{$k$} & $|k|<|\cos (\pi \a)|$  & $k=\cos (\pi \a) $ & $k=-\cos (\pi \a)$ & $|k|>|\cos (\pi \a)|$ \\
\hline
$0$ & AS & 
pHM& pHM & singular\\
\hline
$(0, \frac{1}{2})$ & AS & pHM & sHM & singular\\
\hline
$(n-\frac{1}{2}, n+\frac{1}{2})$ & qAS & pHM & qHM & singular \\
\hline
$n \pm \frac{1}{2}$ & D.N.E. & \multicolumn{2}{|c|}{pHM}  & singular\\
\hline
\end{tabular}\caption{PII solutions with the boundary condition \eqref{asy-pos-Fokas} when $\a \geq 0$, $k\in \mathbb{R}$ and $n\in \mathbb{N}$. Here ``D.N.E." stands for ``does not exist".}\label{solutions types}}
\end{table}

The asymptotic formulas \eqref{asy-pos}-\eqref{phi-k} were first obtained in 1992 by Kapaev \cite{Kapaev1992} with the help of the isomonodromy method. Some years later, rigorous proofs were derived by applying Deift-Zhou nonlinear steepest descent method: Its and Kapaev \cite{Its2003} proved the asymptotics \eqref{asy-pos} for any $\a- 1/2 \notin \mathbb{Z}$ when $x\to +\infty$; Bothner and Its \cite{Bothner:Its2012} proved the asymptotics \eqref{asy-neg} as $x\to-\infty$ and the connection formulas \eqref{d-k}-\eqref{phi-k} for the homogeneous case ($\a=0$).

Comparing with the isomonodromy method, one advantage of the nonlinear steepest descent method is that no prior assumption about the behavior of the solution is needed, which makes the asymptotics derived by this method rigorous. The later method was first introduced by Deift and Zhou \cite{Deift1993} in 1993 and it has been successfully applied to solve asymptotic problems in many fields; see \cite{Buck:Miller2015,Cla:Tamara2010,Claeys2008,Dei:Kri:McL:Ven:Zhou1999-2,Deift1995,Its2003,Kri:McL1999,Xu:Dai:Zhao2014}.

\subsection{Riemann-Hilbert problem for PII}\label{sec:RHP-PII}

%
%

We will prove Theorem \ref{main-thm} by using Deift-Zhou nonlinear steepest descent method for Riemann-Hilbert (RH) problems. Let us first introduce the RH problem associated with PII for all $\a >0$ as follows; see \cite{Claeys2008,Its2003}. Corresponding results for $\a <0$ can be obtained through the symmetry relation \eqref{symmetry-relation}.

\medskip

\noindent\textbf{RH problem for $\Psi_\a(\l)$:} \\
We seek a $2 \times 2$ matrix-valued function $\Psi_\a(\l)$ satisfying the following properties.
\begin{itemize}
\item[(a)] $\Psi_\a(\l)$ is analytic for $\l \in \mathbb{C} \cut \Sigma$. Here $\Sigma =\D\cup_{k= 1}^{4} \gamma_k$ with $\gamma_1=\{\l \in \mathbb{C}: \textrm{arg}\, \l = \frac{\pi}{6}\}$, $\gamma_2=\{\l \in \mathbb{C}: \textrm{arg}\, \l = \frac{5\pi}{6}\}$ , $\gamma_3=\{\l \in \mathbb{C}: \textrm{arg}\, \l = -\frac{5\pi}{6}\}$ and $\gamma_4=\{\l \in \mathbb{C}: \textrm{arg}\, \l = -\frac{\pi}{6}\}$ are four rays oriented to infinity; see Figure \ref{contour-Arno}.

    \begin{figure}[h]
\centering
  \includegraphics[width=10cm]{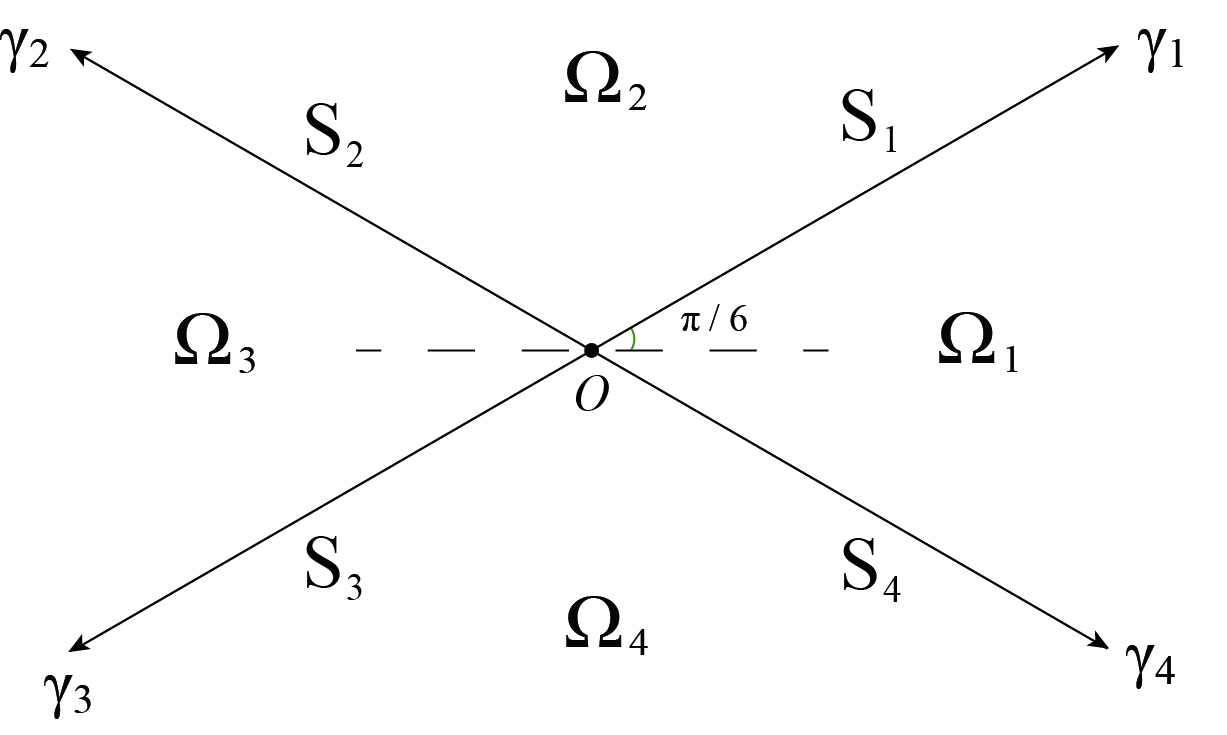}\\
  \caption{The contour $\Sigma$ and corresponding jump matrices.}\label{contour-Arno}
\end{figure}

\item[(b)] Let $\Psi_{\a,\pm}(\l)$ denote the limits of $\Psi_{\a}(\l)$ as $\l$ tends to the jump contour from left and right hand sides, respectively. They satisfy the following jump relations:
\begin{equation} \label{Psi+Psi-}
\Psi_{\a, +}(\l)= \Psi_{\a, -} (\l) S_k, \qquad \textrm{for } \l \in \gamma_k,
\end{equation}
with
\begin{eqnarray} \label{Psi-jumps}
 &S_1& = \left(
  \begin{array}{cc}
  1 & 0\\
    s_1 & 1
  \end{array}
    \right),  \quad S_2 = \left(
  \begin{array}{cc}
  1 & 0\\
    s_3 & 1
  \end{array}
    \right), \\
     &S_3& = \left(
  \begin{array}{cc}
  1 & -s_1\\
    0 & 1
  \end{array}
    \right),  \quad S_4 = \left(
  \begin{array}{cc}
  1 & -s_3\\
    0 & 1
  \end{array}
    \right), \label{Psi-jumps2}
\end{eqnarray}
and
 \begin{equation}\label{stokes-real-as}
 s_1=\bar{s}_3 = -\sin (\pi \a) - k i, \quad \textrm{with } k\in \mathbb{R} \textrm{ and } |k|> |\cos \pi \a|.
 \end{equation}

 \item[(c)] As $\l \to \infty$, $\Psi_\a(\l)$ satisfies the asymptotics:
 \begin{equation}\label{Psi-infty-asy}
  \Psi_\a(\l)e^{\theta (\l)\sigma_3}  =I+ \frac{\Psi_{1}(x)}{\l}+O(\l^{-2}) , 
 \end{equation}
 with
 \begin{equation} \label{theta-def}
   \theta(\l)= \frac{4}{3}i\l^3 + ix\l,\qquad \sigma_3 = \left(\begin{array}{cc}
  1 & 0\\
    0 & -1
  \end{array}\right).
 \end{equation}
 Note that the function $\Psi_1(x)$ is unknown at present.

 \item[(d)] At $\l =0$, $\Psi_\a(\l)$ satisfies the following behaviors:
\begin{eqnarray}
\Psi_{\a}(\l)=O\left(\begin{matrix}
  |\l|^{-\a} &  |\l|^{-\a}\\
   |\l|^{-\a} & |\l|^{-\a}
  \end{matrix}\right), \quad \l \in \Omega_{1,3},\label{asy-psi-origin-1}\\
  \Psi_{\a}(\l)\left(\begin{matrix}
  1 & \frac{s_3 +i e^{-\pi i \a}}{1-s_1s_3}\\
   0 & 1
  \end{matrix}\right) =O\left(\begin{matrix}
  |\l|^{-\a} &  |\l|^{\a}\\
   |\l|^{-\a} & |\l|^{\a}
  \end{matrix}\right), \quad \l \in \Omega_{2},\label{asy-psi-origin-2}\\
  \Psi_{\a}(\l)\left(\begin{matrix}
  1 & 0\\
  -\frac{s_3 +i e^{-\pi i \a}}{1-s_1s_3} & 1
  \end{matrix}\right) =O\left(\begin{matrix}
  |\l|^{\a} &  |\l|^{-\a}\\
   |\l|^{\a} & |\l|^{-\a}
  \end{matrix}\right), \quad \l \in \Omega_{4},\label{asy-psi-origin-3}
\end{eqnarray}
where the branch cut of $\l^{\a}$ is chosen arbitrarily.
\end{itemize}

From \cite{Bolibruch2004, Bothner:Its2012}, we know that the above RH problem is meromorphically solvable in terms of $x$ and its solution is related to the meromorphic solution of PII equation \eqref{PII-def} by the following connection:
\begin{equation}\label{PII-RHP-rel}
  u (x; \a) =2 \biggl( \Psi_{1}(x) \biggr)_{12}.
\end{equation}

\begin{rmk}
By Liouville's theorem, one can easily verify that, if there exists a solution to the above RH problem, it must be unique.
\end{rmk}


\begin{rmk} \label{rmk-real-reduction}
According to \cite[Remark 11.6]{Fokas2006}, a sufficient condition for solution $u(x;\a)$ of PII to be real for real $x$ is
 \begin{equation}\label{iso-data-con}
     s_1+s_3  = - 2 \sin \pi\a, \qquad s_1 = \bar{s}_3,
\end{equation}
where $s_k$ are the  Stokes multipliers in \eqref{Psi-jumps} and \eqref{Psi-jumps2}. Of course, the values in \eqref{stokes-real-as} satisfy the above requirement. Moreover, since we are focusing on the case $|k|>|\cos(\pi \a)|$ in Theorem \ref{main-thm}, we have $|s_1|=|s_3|>1$ and $1-s_1s_3=k^2-\cos^2(\pi \a)<0$. This implies that the behaviors in \eqref{asy-psi-origin-2} and \eqref{asy-psi-origin-3} are well-defined.

Note that, for AS (or qAS) and HM (or qHM) solutions, the Stokes multipliers are chosen to satisfy the conditions $|s_1|=|s_3|<1$ and $|s_1|=|s_3|=1$, respectively; see \cite{Claeys2008,Dai:Hu2017}.
Then, from a slightly different point of view, the three families of PII solutions with boundary condition \eqref{asy-pos-Fokas} in Table \ref{solutions types} can also be classified based on the sign of $1-s_1s_3$.
\end{rmk}

\begin{rmk}\label{rmk-half-int-singular}
 At $\l=0$, we claim that the RH problem for $\Psi_{\a}(\l)$ does work for $\a - 1/2 \in \mathbb{N}$. It is true that a logarithmic singularity will appear at the origin when $\a -1/2\in \mathbb{N}$. An interesting phenomenon is that, when we are considering the asymptotic behavior of $\Psi_\a(\l)$ near $\l =0$, the contribution of the logarithmic singularity is absorbed by the algebraic terms in \eqref{asy-psi-origin-1}-\eqref{asy-psi-origin-3}. This issue has been discussed carefully in Claeys et al. in \cite[Prop. 2.3]{Claeys2008}. Similar results also hold in our case; see the detailed description in Proposition \ref{prop-M} regarding of the function $M$ in the following Section.
\end{rmk}

The rest of this paper is arranged as follows. We will first introduce a model RH problem and give its explicit solution in Section \ref{modl-RHP}. It plays an important role in extending the asymptotics \eqref{asy-pos} of Its and Kapaev \cite{Its2003} from $\a - 1/2 \notin \mathbb{Z}$ to any real $\a$ as $x\to +\infty$ in Section \ref{sec:extension}. It is also used to construct the local parametrix at origin in the nonlinear steepest descent analysis for $x\to -\infty$ in Section \ref{sec-non-anal}. Finally, in the last Section, we finish the proof of Theorem \ref{main-thm}.

\section{A model RH problem}\label{modl-RHP}

In this section, we first introduce a model RH problem for $M(\eta)$ and give an explicit solution for any $\a>0$.
\subsection{RH problem for $M$}

\begin{itemize}

\item[(a)] $M(\eta)$ is analytic when $\eta \in \mathbb{C} \cut \Sigma_M$; Here $\Sigma_M =\D\cup_{k= 1}^{4} \Gamma_k$ with $\Gamma_1=\{\eta \in \mathbb{C}: \textrm{arg}\, \eta = \frac{\pi}{6}\}$, $\Gamma_2=\{\eta \in \mathbb{C}: \textrm{arg}\, \eta =- \frac{7\pi}{6}\}$ , $\Gamma_3=\{\eta \in \mathbb{C}: \textrm{arg}\, \eta = -\frac{5\pi}{6}\}$ and $\Gamma_4=\{\eta \in \mathbb{C}: \textrm{arg}\, \eta = -\frac{\pi}{6}\}$ are four rays oriented to infinity; see Figure \ref{contour-M}.

\begin{figure}[h]
  \centering
  \includegraphics[width=8cm]{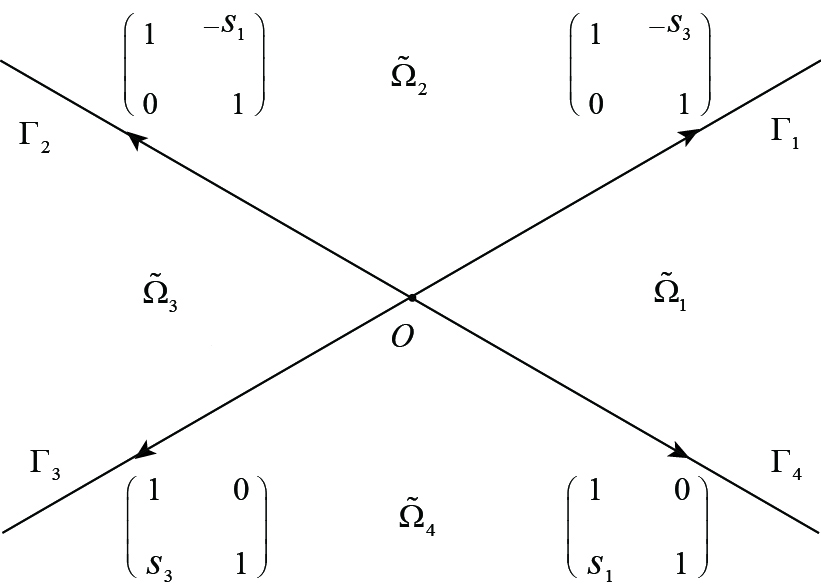}
  \caption{The contour $\Sigma_M$ with corresponding jump matrices $J_M$.}\label{contour-M}
\end{figure}

\item[(b)]On $\Sigma_M$, $M(\eta)$ satisfies the following jump relations:
\begin{equation}\label{M+ M-}
 M_{+}(\eta) = M_{-}(\eta) J_M,
\end{equation}
where the jump matrices $J_M$ are given in Figure \ref{contour-M}. Here $s_1$ and $s_3$ are required to satisfy the first condition in \eqref{iso-data-con}, i.e.,  $s_1+s_3 = -2 \sin (\pi \a)$.

\item[(c)] $M(\eta)$ has the following asymptotics as $\eta \to \infty$:
\begin{equation} \label{M-large-eta}
M(\eta) = \left(I+O\left(\frac{1}{\eta}\right)\right)e^{i\eta\sigma_3}.
\end{equation}
\item[(d)]$M(\eta)$ has the following singularity at $\eta = 0$:
 \begin{eqnarray}
M(\eta)=&O\left(\begin{matrix}
  |\eta|^{-\a} &  |\eta|^{-\a}\\
   |\eta|^{-\a} & |\eta|^{-\a}
  \end{matrix}\right), \quad \eta \in \tilde{\Omega}_{1,3},\label{M-origin-asy-1}\\
 M(\eta)\left(\begin{matrix}
  1 & s_3 +i e^{-\pi i \a}\\
   0 & 1
  \end{matrix}\right) =&O\left(\begin{matrix}
  |\eta|^{-\a} &  |\eta|^{\a}\\
   |\eta|^{-\a} & |\eta|^{\a}
  \end{matrix}\right), \quad \eta \in \tilde{\Omega}_{2},\label{M-origin-asy-2}\\
 M(\eta)\left(\begin{matrix}
  1 & 0\\
  -s_3 -i e^{-\pi i \a} & 1
  \end{matrix}\right) =&O\left(\begin{matrix}
  |\eta|^{\a} &  |\eta|^{-\a}\\
   |\eta|^{\a} & |\eta|^{-\a}
  \end{matrix}\right), \quad \eta \in \tilde{\Omega}_{4},\label{M-origin-asy-3}
\end{eqnarray}
where the branch cuts of $\eta^{\pm\a}$ are chosen along $\Gamma_2$.
\end{itemize}

 The above RH problem is motivated from the problem in \cite[P. 609-610]{Claeys2008}, which is associated with the HM solutions for PII. Note that the coefficients $\left(\begin{matrix}
  1 & 0\\
  s_3 +i e^{-\pi i \a} & 1
  \end{matrix}\right)$ and $\left(\begin{matrix}
  1 & 0\\
  -s_3 -i e^{-\pi i \a} & 1
  \end{matrix}\right)$ are from the original RH problem for $\Psi_{\a}$.

\begin{prop}\label{prop-M}
When $\a\geq 0$, the unique solution to the above RH problem for $M$ is given by
\begin{equation}\label{M-expression}
M(\eta)=\begin{cases}\D M_1(\eta) , & \quad \quad \eta \in \tilde{\Omega}_1\\
M_1(\eta)\left(\begin{matrix}
1 & -s_3\\
0 & 1
\end{matrix}\right) ,& \quad \quad \eta \in \tilde{\Omega}_2\\
M_2(\eta),& \quad \quad \eta \in \tilde{\Omega}_3\\
M_2(\eta)\left(\begin{matrix}
1 & 0\\
s_3 & 1
\end{matrix}\right) ,& \quad \quad \eta \in \tilde{\Omega}_4,
\end{cases}
\end{equation}
where $M_k(\eta)$, $k = 1,2$, are defined in terms of Hankel functions $H^{(1,\,2)}_{\a -\frac{1}{2}}$ and $H^{(1,\,2)}_{\a+\frac{1}{2}}$ as follows:
\begin{equation}\label{M1}
M_1(\eta)=\frac{\sqrt{\pi}}{2\sqrt{2}}\left(\begin{matrix}
1 & i\\
i & 1
\end{matrix}\right) \left(\begin{matrix}i\eta^{\frac{1}{2}} H^{(1)}_{\a +\frac{1}{2}}(\eta) & -\eta^{\frac{1}{2}}H^{(2)}_{\a +\frac{1}{2}}(\eta)\\
-i\eta^{\frac{1}{2}} H^{(1)}_{\a -\frac{1}{2}}(\eta) & \eta^{\frac{1}{2}}H^{(2)}_{\a -\frac{1}{2}}(\eta)\end{matrix}\right) e^{\frac{\a}{2}\pi i\sigma_3}
\end{equation}
and
\begin{equation}\label{M2}
M_2(\eta)=\frac{\sqrt{\pi}}{2\sqrt{2}}\left(\begin{matrix}
1 & i\\
i & 1
\end{matrix}\right) \left(\begin{matrix}i \eta^{\frac{1}{2}}H^{(2)}_{\a +\frac{1}{2}}(e^{\pi i}\eta) & \eta^{\frac{1}{2}}H^{(1)}_{\a +\frac{1}{2}}(e^{\pi i}\eta)\\
i\eta^{\frac{1}{2}} H^{(2)}_{\a -\frac{1}{2}}(e^{\pi i}\eta) & \eta^{\frac{1}{2}}H^{(1)}_{\a -\frac{1}{2}}(e^{\pi i}\eta)\end{matrix}\right) e^{-\frac{\a+1 }{2}\pi i\sigma_3}.
\end{equation}
Moreover, as $\a - \frac{1}{2} \in \mathbb{N}$, $M(\eta)$ has a logarithmic singularity at the origin.
\end{prop}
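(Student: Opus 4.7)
The plan is to verify directly that the piecewise-defined candidate $M(\eta)$ in \eqref{M-expression}--\eqref{M2} satisfies all four conditions (a)--(d) of the RH problem; uniqueness then follows from a standard Liouville argument applied to $M^{(1)}(M^{(2)})^{-1}$ for any two candidate solutions, since the singularities at the origin in (d) are too weak to obstruct the ratio from extending to an entire function, which is forced to be $I$ by the normalization at infinity.

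For the jump conditions, the jumps on $\Gamma_1$ and $\Gamma_4$ are built into the piecewise definition, so only $\Gamma_2$ and $\Gamma_3$ require work. I would invoke the standard Hankel connection formulas relating $H^{(1,2)}_{\nu}(e^{\pi i}\eta)$ to $H^{(1,2)}_{\nu}(\eta)$, whose coefficients involve $\cos(\pi\nu)$ and $\sin(\pi\nu)$ with $\nu=\alpha\pm\tfrac12$, and then apply the real-reduction identity $s_1+s_3=-2\sin(\pi\alpha)$ to carry out the trigonometric simplification. The placement of the $\eta^{\pm\alpha}$ branch cuts along $\Gamma_2$ produces the $e^{\pm i\pi\alpha}$ phases, which combine cleanly with the $e^{\pm\alpha\pi i\sigma_3/2}$ prefactors in \eqref{M1}--\eqref{M2} to yield precisely the jump matrices $J_M$ of Figure \ref{contour-M}. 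The asymptotic condition \eqref{M-large-eta} follows from the classical large-argument expansion $H^{(1,2)}_{\nu}(\eta)\sim \sqrt{2/(\pi\eta)}\,e^{\pm i(\eta-\nu\pi/2-\pi/4)}$: substituting into \eqref{M1}, the $e^{\mp i\pi/4}$ and $e^{\mp i\nu\pi/2}$ phases combine with the explicit constant left prefactor to produce $I\cdot e^{i\eta\sigma_3}$ at leading order with $O(\eta^{-1})$ error, and the argument for $M_2$ is the same modulo a Hankel reflection identity.

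The behavior at $\eta=0$ together with the final assertion about the logarithmic singularity is the main obstacle. Using the small-argument expansions of $H^{(1,2)}_{\alpha\pm 1/2}$ in terms of $J_{\alpha\pm 1/2}$ and $Y_{\alpha\pm 1/2}$, every entry of $M$ is bounded by $|\eta|^{-\alpha}$ since $Y_{\alpha\pm 1/2}(\eta)\sim \eta^{-(\alpha\pm 1/2)}$, yielding \eqref{M-origin-asy-1}. For \eqref{M-origin-asy-2}--\eqref{M-origin-asy-3} the triangular column multiplication must cancel the dominant $|\eta|^{-\alpha}$ contribution in the second (respectively first) column, and I would verify that $s_3+ie^{-i\pi\alpha}$ is precisely the ratio of leading coefficients needed to annihilate the $Y_{\alpha+1/2}$ term against $J_{\alpha+1/2}$. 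In the resonant case $\alpha-\tfrac12\in\mathbb{N}$, the functions $Y_{\alpha\pm 1/2}$ pick up an explicit $\log\eta$ contribution; following the argument in \cite[Prop.~2.3]{Claeys2008}, the coefficient of each logarithm is a fixed multiple of the corresponding $J_{\alpha\pm 1/2}$, so after the triangular column operation the log-term remains inside the $|\eta|^{\pm\alpha}$ algebraic bounds already present in \eqref{M-origin-asy-1}--\eqref{M-origin-asy-3}. This simultaneously confirms that the RH problem is well-posed at $\eta=0$ and produces the claimed logarithmic singularity of $M(\eta)$ at the origin.
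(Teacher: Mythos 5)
Your proposal follows essentially the same route as the paper's own proof: verify the jumps via the Hankel rotation identities of \cite[Eq.~(10.11.3)--(10.11.4)]{Olver:NIST}, obtain the normalization at infinity from the large-argument Hankel expansions (together with the exponential decay $|e^{\pm 2i\eta}|$ of the off-diagonal sectoral factors in the relevant sectors), and deduce the origin behavior and the logarithmic degeneracy at $\a-\tfrac12\in\mathbb{N}$ from the small-argument Hankel and Bessel asymptotics. Two bookkeeping corrections worth flagging before you write this out in full: the jumps that are built into the piecewise definition \eqref{M-expression} are those across $\Gamma_1$ and $\Gamma_3$ (each of which separates two sectors that use the \emph{same} $M_k$), so the Hankel connection formulas are actually needed on $\Gamma_2$ and $\Gamma_4$, not on $\Gamma_2$ and $\Gamma_3$; and the constant that effects the cancellation at the origin in $\tilde\Omega_2$ is the net factor $ie^{-\pi i\a}$, since the $-s_3$ from the sectoral factor $\left(\begin{smallmatrix}1 & -s_3\\ 0 & 1\end{smallmatrix}\right)$ absorbs the $s_3$ in the column operation of condition~(d), after which the combination $H^{(1)}_\nu+H^{(2)}_\nu=2J_\nu$ gives the milder $|\eta|^{\a}$ growth in the second column. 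With those adjustments your outline matches the paper's argument.
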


\begin{proof}
The uniqueness of the solution to the RH problem for $M$ is easy to check by applying the Liouville's theorem. Next, we prove that the function defined in Proposition \ref{prop-M} solve the RH problem for $M$.

According to the properties of $H^{(1,\,2)}_{\a -\frac{1}{2}}$ and $H^{(1,\,2)}_{\a+\frac{1}{2}}$ in \cite[Eq. (10.11.3)-(10.11.4)]{Olver:NIST}, we have the following formulas for $m\in \mathbb{Z}$ and all $\eta\in \mathbb{C}$,
\begin{eqnarray}
 \hspace{-15pt} \sin\left(\nu\pi\right){H^{(1)}_{\nu}}\left(\eta e^{m\pi i}\right)&=&-\sin\left((m-1%
)\nu\pi\right){H^{(1)}_{\nu}}\left(\eta\right)-e^{-\nu\pi i}\sin\left(m\nu\pi%
\right){H^{(2)}_{\nu}}\left(\eta\right) \\
 \hspace{-15pt} \sin\left(\nu\pi\right){H^{(2)}_{\nu}}\left(\eta e^{m\pi i}\right)&=&e^{\nu\pi i}%
\sin\left(m\nu\pi\right){H^{(1)}_{\nu}}\left(\eta\right)+\sin\left((m+1)\nu\pi%
\right){H^{(2)}_{\nu}}\left(\eta\right).
\end{eqnarray}
Then, we obtain the following relations between $M_1(\eta)$ and $M_2(\eta)$:
\begin{equation}
M_1(\eta)=M_2(\eta)\left(\begin{matrix}
1 & 0\\
-2\sin (\pi \a) & 1
\end{matrix}\right)=M_2(\eta)\left(\begin{matrix}
1 & 0\\
s_1 & 1
\end{matrix}\right)\left(\begin{matrix}
1 & 0\\
s_3 & 1
\end{matrix}\right),
\end{equation}
\begin{equation}
M_2(e^{-2\pi i}\eta)=M_1(\eta)\left(\begin{matrix}
1 & 2\sin (\pi \a)\\
0 & 1
\end{matrix}\right)=M_1(\eta)\left(\begin{matrix}
1 & -s_1\\
0 & 1
\end{matrix}\right)\left(\begin{matrix}
1 & -s_3\\
0 & 1
\end{matrix}\right).
\end{equation}
Thus, the function defined in Proposition \ref{prop-M} satisfies the jump conditions in the RH problem for $M$. Next, we check the behaviors  as $\eta\to \infty$ and $\eta \to 0$.
\begin{itemize}
\item Behaviors as $\eta \to \infty$:\\ According to the following asymptotics in \cite[Eq. (10.17.5)-(10.17.6)]{Olver:NIST}:
$$H^{(1)}_{\nu}\left(\eta\right)\sim\left(\frac{2}{\pi \eta}\right)^{\frac{1}{2}}e^{i(\eta-\frac{1}{2}\nu\pi-
\frac{1}{4}\pi)}(1+O(\eta^{-1})), \qquad  \eta \to \infty, \quad \arg{\eta} \in (-\pi, 2\pi)$$
and
$$ H^{(2)}_{\nu}\left(\eta\right)\sim\left(\frac{2}{\pi \eta}\right)^{\frac{1}{2}}e^{-i(\eta-\frac{1}{2}\nu\pi-
\frac{1}{4}\pi)}(1+O(\eta^{-1})), \qquad  \eta \to \infty,\quad \arg{\eta} \in (-2\pi, \pi),$$
it is easy to see that
\begin{eqnarray}
M_1(\eta)&=&(I+O(\eta^{-1}))e^{i\eta \sigma_3}, \quad \eta\to \infty, \quad \arg{\eta} \in (-\pi, \pi),\\
M_2(\eta)&=&(I+O(\eta^{-1}))e^{i\eta \sigma_3}, \quad \eta\to \infty, \quad \arg{\eta} \in (-2\pi, 0).
\end{eqnarray}

Then, for $\eta \in \tilde{\Omega}_{1,3}$, we have $M_1(\eta)=(I+O(\eta^{-1}))e^{i\eta \sigma_3}$ as $\eta\to \infty$.
For $\eta \in \tilde{\Omega}_2$, we get
\begin{eqnarray}
M_{1}(\eta)\left(\begin{matrix}
  1 & -s_3\\
   0 & 1
  \end{matrix}\right)&=&(I+O(\eta^{-1}))e^{i\eta \sigma_3}\left(\begin{matrix}
  1 & -s_3\\
   0 & 1
  \end{matrix}\right)\nonumber\\
  &=&(I+O(\eta^{-1}))\left(\begin{matrix}
  1 & -s_3 e^{2i\eta}\\
   0 & 1
  \end{matrix}\right)e^{i\eta \sigma_3}\nonumber\\
  &=&(I+O(\eta^{-1}))e^{i\eta \sigma_3}, \qquad \textrm{as } \eta \to \infty.
  \end{eqnarray}
  Here we use the fact that when $\eta \in \tilde{\Omega}_2$ and $\eta\to \infty$, $|e^{2i\eta}|=e^{-2\im{\eta}}$ is exponentially small. Similar, we also have $M_{2}(\eta)\left(\begin{matrix}
  1 & 0\\
   s_3 & 1
  \end{matrix}\right)=(I+O(\eta^{-1}))e^{i\eta \sigma_3}$ as $\eta \to \infty$ for $\eta \in \tilde{\Omega}_4$. This establishes the asymptotics in \eqref{M-large-eta}.

  \item Behaviors as $\eta \to 0$:\\
  Similarly, according to the following asymptotics in \cite[Eq. (10.7.2)\&(10.7.7)]{Olver:NIST}:
   \begin{equation}
   {H^{(1)}_{\nu}}\left(\eta\right)\sim-{H^{(2)}_{\nu}}\left(\eta\right)\sim-(i/\pi)%
\Gamma\left(\nu\right)(\frac{1}{2}\eta)^{-\nu}, \qquad \eta \to 0
 \end{equation}
 for $\re{\nu}>0$, and
 \begin{equation}\label{H0-origin}
{H^{(1)}_{0}}\left(\eta\right)\sim-{H^{(2)}_{0}}\left(\eta\right)\sim(2i/\pi)\ln \eta, \qquad \eta \to 0,
\end{equation}
we have the following estimations of $M$ near the origin.
\begin{itemize}
\item[(i)] When $\a \geq \frac{1}{2}$: now $\a\pm \frac{1}{2}\geq 0$.\\
 \noindent \emph{- For $\eta \in \tilde{\Omega}_1$}:
\begin{eqnarray}\label{asy-0-M1}
M(\eta)&=&M_1(\eta)\nonumber\\
&\sim & \frac{\eta^{\frac{1}{2}}}{2\sqrt{2\pi}}\left(\begin{matrix}
1 & i\\
i & 1
\end{matrix}\right) \left(\begin{matrix} \Gamma(\a+\frac{1}{2})(\frac{1}{2}\eta)^{-\a-\frac{1}{2}} & -i \Gamma(\a+\frac{1}{2})(\frac{1}{2}\eta)^{-\a-\frac{1}{2}}\\
- \Gamma(\a-\frac{1}{2})(\frac{1}{2}\eta)^{-\a+\frac{1}{2}} & i \Gamma(\a-\frac{1}{2})(\frac{1}{2}\eta)^{-\a+\frac{1}{2}}\end{matrix}\right) e^{\frac{\a}{2}\pi i\sigma_3}\nonumber \\
&=& O\left(\begin{matrix}
  |\eta|^{-\a} &  |\eta|^{-\a}\\
   |\eta|^{-\a} & |\eta|^{-\a}
  \end{matrix}\right)\qquad \textrm{as } \eta \to 0.
\end{eqnarray}
\noindent \emph{- For $\eta \in \tilde{\Omega}_2$}:
\begin{eqnarray}\label{asy-0-M1}
 && M(\eta)\left(\begin{matrix}
  1 & s_3 +i e^{-\pi i \a}\\
   0 & 1
  \end{matrix}\right)
  =M_1(\eta)\left(\begin{matrix}
  1 & i e^{-\pi i \a}\\
   0 & 1
  \end{matrix}\right)\nonumber\\
& & \  = \frac{\sqrt{\pi}\eta^{\frac{1}{2}}}{2\sqrt{2}}\left(\begin{matrix}e^{\frac{\a}{2}\pi i} \left(iH^{(1)}_{\a +\frac{1}{2}}(\eta) + H^{(1)}_{\a -\frac{1}{2}}(\eta)\right) & e^{-\frac{\a}{2}\pi i}\left(-2J_{\a +\frac{1}{2}}(\eta)+2iJ_{\a -\frac{1}{2}}(\eta)\right)\\
-e^{\frac{\a}{2}\pi i} \left(H^{(1)}_{\a +\frac{1}{2}}(\eta)+iH^{(1)}_{\a -\frac{1}{2}}(\eta)\right) & e^{-\frac{\a}{2}\pi i}\left(-2iJ_{\a +\frac{1}{2}}(\eta)+2J_{\a -\frac{1}{2}}(\eta)\right)\end{matrix}\right)\nonumber \\
& & \  = O\left(\begin{matrix}
  |\eta|^{-\a} &  |\eta|^{\a}\\
   |\eta|^{-\a} & |\eta|^{\a}
  \end{matrix}\right), \qquad \textrm{as } \eta \to 0,
\end{eqnarray}
in which we have used the fact that the logarithmic singularity $\ln \eta$ can be absorbed by the algebraic singularity $\eta^{-\a}$ at the origin and the connection formula$${H^{(1)}_{\nu}}\left(\eta\right)+{H^{(2)}_{\nu}}
\left(\eta\right)=J_{\nu}(\eta).$$
Note that, as $\eta \to 0$, $J_{\nu}\left(\eta\right)$ has the following behavior
$$J_{\nu}\left(\eta\right)\sim(\tfrac{1}{2}\eta)^{\nu}/\Gamma\left(\nu+1\right).$$

It is easy to see from \eqref{H0-origin} that there is a logarithmic singularity at the origin when $\a = \frac{1}{2}$. It is worthwhile to point out that logarithmic behaviors will also appear at the origin when $\a \pm 1/2 \in \mathbb{N} $. This is due to the following relation
\begin{equation}
H^{(1,2)}_{n}(\eta) = (-1)^{n}\eta^{n}\left(\frac{1}{\eta}\frac{\textrm{d}}{\textrm{d}\eta}
\right)^{n}(H^{(1,2)}_0(\eta)), \quad n\in \mathbb{N};
\end{equation}
see \cite[Eq. (10.6.6)]{Olver:NIST}. As a consequence, we conclude that $M$ has a logarithmic singularity at the origin for all $\a =n -\frac{1}{2}$ as $n \in \mathbb{N}$.

\noindent \emph{- For $\eta \in \tilde{\Omega}_{3,4}$}, the calculations are similar.

\item[(ii)] When $0<\a<1/2$: now $\a+1/2>0$ but $\a -1/2<0$.
By the following formulas: see \cite[Eq. 10.4.6]{Olver:NIST};
$$H^{(1)}_{-\nu}(\eta)=e^{\nu\pi i}{H^{(1)}_{\nu}}\left(\eta\right) \quad \textrm{and} \quad H^{(2)}_{-\nu}(\eta)=e^{-\nu\pi i}{H^{(2)}_{\nu}}\left(\eta\right),$$
and the similar computations as in the above case, we obtain the same behaviors of $M$ at the origin.
\end{itemize}
\end{itemize}
As a result, the function $M(\eta)$ defined in \eqref{M-expression}-\eqref{M2} solves the model RH problem for any $\a$.

This finishes the proof of the proposition.
\end{proof}

\begin{rmk}
Note that, in the neighbourhood of interior algebraic singular points, a similar parametrix was constructed in terms of $H^{(1,\,2)}_{\a -\frac{1}{2}}$ and $H^{(1,\,2)}_{\a+\frac{1}{2}}$ in Vanlessen \cite{Vanlessen2003}, where the RH problem has jumps on 8 rays instead of 4 rays in Figure \ref{contour-M}.
\end{rmk}

There are two motivations for us to introduce the RH problem for $M$. The first one is that the RH problem for $M$ and its solution play an important role to extend the asymptotics \eqref{asy-pos} as $x\to +\infty$ from $\a -\frac{1}{2} \notin \mathbb{Z}$ in \cite{Its2003} to any $\a$. Combining the above proposition, we apply a simplified and rotated RH problem of $M$ to achieve this target in Section \ref{sec:extension}.

Second, when we proceed to the nonlinear steepest descent analysis as $x\to -\infty$, the model RH problem for $M$ will be applied to construct the local parametrix at the origin in Section \ref{sec-paramx-0}.

\section{Extension to $\a - \frac{1}{2} \in \mathbb{Z}$ as $x\to +\infty$}\label{sec:extension}

In this section, we apply the Deift-Zhou nonlinear steepest analysis for the original RH problem for $\Psi_\a(\l)$ as $x\to + \infty$. The analysis is similar to that in Its and Kapaev \cite{Its2003}. The novel part is the new local parametrix construction near the origin, which extends the original results in \cite{Its2003} from $\a -\frac{1}{2} \notin \mathbb{Z}$ to any $\a$.

Since $x $ is positive, we introduce the change of variable $\l(z) = x^{1/2}z$ and let $t=x^{3/2}$. Then $\theta(\l)$ in \eqref{theta-def} is transformed into $t\hat{\theta}(z)$ with
$$\hat{\theta}(z):=i(\frac{4}{3}z^3 + z).$$
By taking the normalization $$\widehat{U}(z)=\Psi_{\a}(\l(z))\exp(t\hat{\theta}(z)\sigma_3),$$ the original RH problem for $\Psi_{\a}$ is transformed into the RH problem for $\widehat{U}$ with $\widehat{U}(z)\to I$ as $z\to \infty$ and the jumps $S_k$ in \eqref{Psi-jumps} turn into $e^{-t\hat{\theta}(z) \sigma_3}  S_j e^{t\hat{\theta}(z) \sigma_3} $. This transformation doesn't change the diagonal entries, but multiplies the upper and lower triangular entries of $S_j$ by $e^{\mp 2t \hat{\theta} (z)}$, respectively. Now, one important thing is to check the properties of $\re \hat{\theta} (z) $ in the complex-$z$ plane. It is easy to see that $\hat{\theta}(z)$ has two stationary points at $z_{\pm}= \pm \frac{i}{2}$ and the property of the signature of $\re \hat{\theta}(z)$ is shown in Figure \ref{sign-hat-theta}.
 \begin{figure}[h]
  \centering
  \includegraphics[width=7cm]{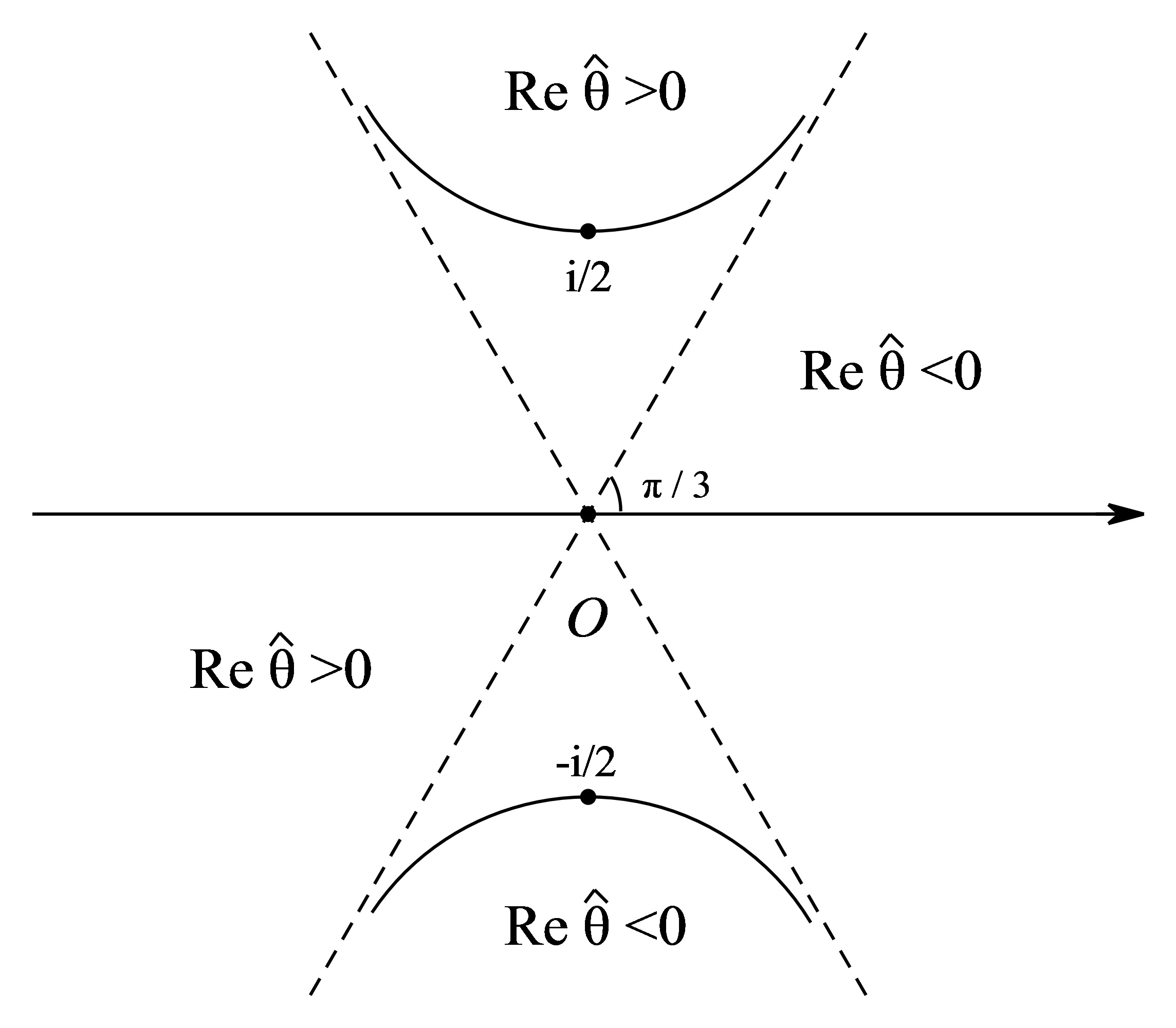}
  \caption{The signature properties of $\re \hat{\theta}(z)$, where the dashed lines are the rays  $ \{z \in \mathbb{C}: \textrm{arg}\, z = \frac{k\pi}{3}, k = 1,2,4,5\}$.}\label{sign-hat-theta}
\end{figure}
\begin{figure}[h]
  \centering
  \includegraphics[width=5cm]{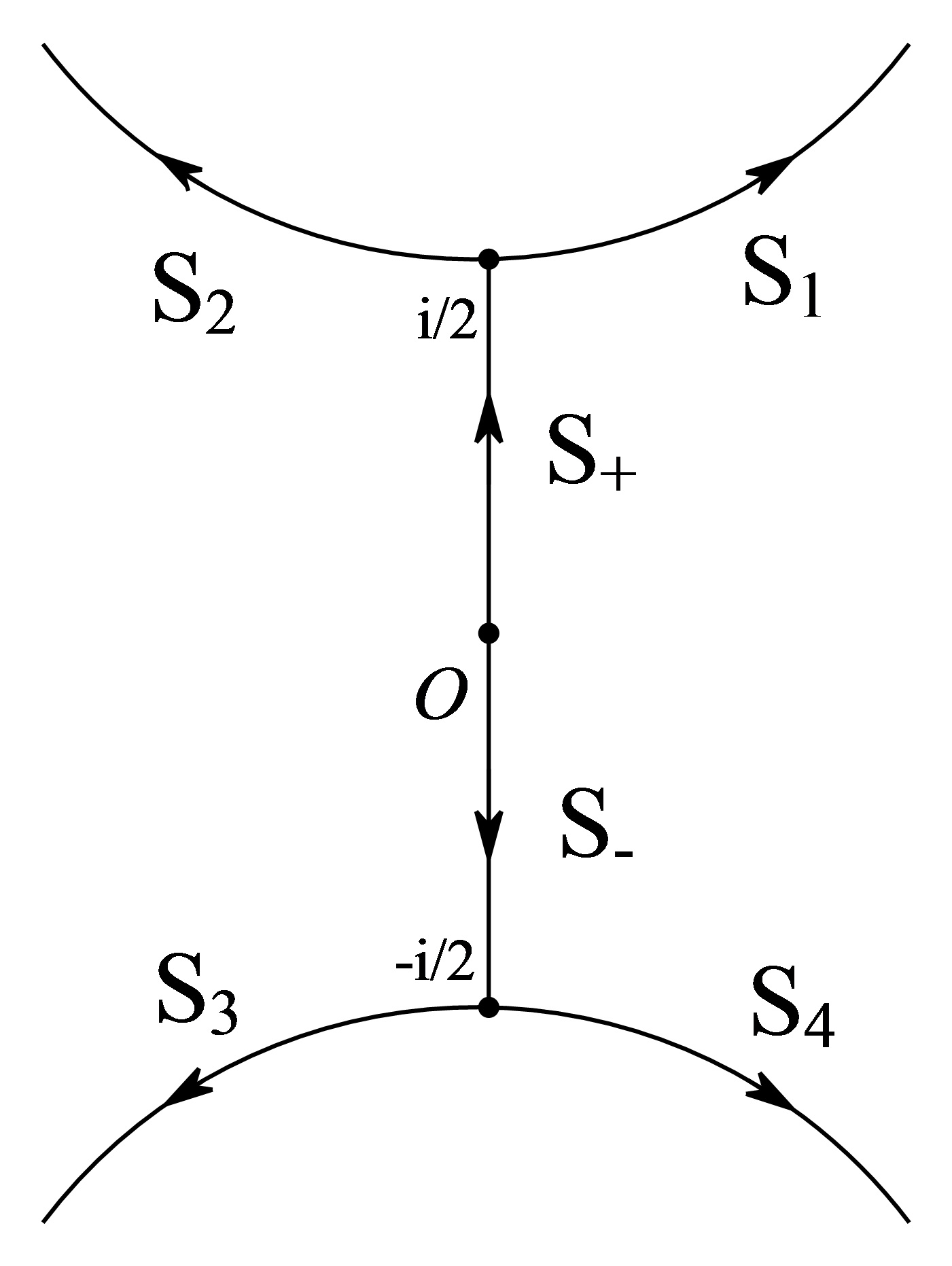}
  \caption{The contour $\Sigma_{\widehat{U}}$ and corresponding jump matrices by ignoring the terms $e^{\pm 2t\hat{\theta}}$.}\label{contour-hat(U)}
\end{figure}
Moreover, we recall that in each sector bounded by the rays $\gamma_j$ in Figure \ref{contour-Arno}, $\Psi_\a(\l)$ is indeed an analytic function with only a branch point at $\l = 0$. Since $z$ is just a rescaling of the variable $\l$, we can deform the original contour $\Sigma$ such that the new one is in accordance with the signature table of $\re \hat{\theta}(z)$ in Figure \ref{sign-hat-theta}. As a result, the original RH problem is transformed into the RH problem for $\widehat{U}$ with the jump conditions shown in Figure \ref{contour-hat(U)}.
\subsection{Asymptotics as $s_3=0$}\label{sec:asy-pos-s3=0}
To derive the asymptotics of $\widehat{U}$ as $x\to +\infty$ (i.e. $t \to +\infty$), we set
\begin{equation}\label{stokes-s3=0}
s_3 = 0,\qquad s_1=-2\sin (\pi \a),
 \end{equation}
 at this moment, which is also used in \cite{Its2003}. Then, the RH problem for $\widehat{U}$ is simplified to the following RH problem for $\widetilde{U}$:
\begin{itemize}
\item[(a)] $\widetilde{U}(z)$ is analytic when $z \in \mathbb{C} \cut l_{\pm}$ with $l_+$ and $l_-$ given in Figure \ref{contour-tildeU-s3=0}.
\begin{figure}[h]
  \centering
  \includegraphics[width=5cm]{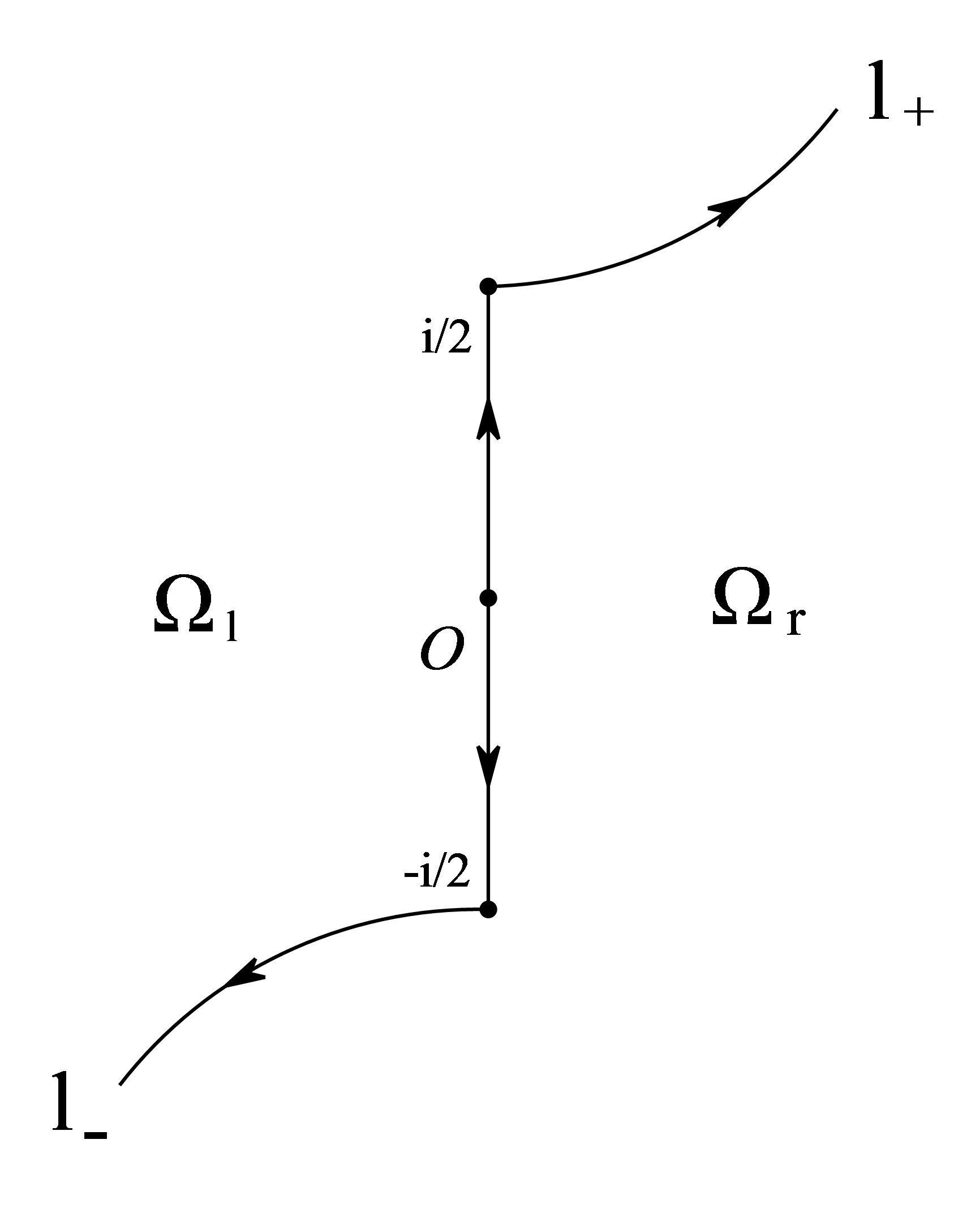}
  \caption{The contour $\Sigma_{\widetilde{U}}$ when $s_3=0$.}\label{contour-tildeU-s3=0}
\end{figure}
\item[(b)] $\widetilde{U}$ satisfies the following jump relations:
\begin{equation} \label{hatU+hatU-1-s3=0}
\widetilde{U}_+(z)= \widetilde{U}_- (z) \left(\begin{matrix}
    1 & 0\\
    -2 \sin (\pi \a) e^{2t\hat{\theta}}& 1
\end{matrix}\right), \qquad \textrm{for } z \in l_+
\end{equation}
and
\begin{equation} \label{hatU+hatU-2-s3=0}
\widetilde{U}_+(z)= \widetilde{U}_- (z) \left(\begin{matrix}
    1 & 2 \sin (\pi \a) e^{-2t\hat{\theta}}\\
    0 & 1
\end{matrix}\right), \qquad \textrm{for } z \in l_-.
\end{equation}
 \item[(c)] As $z \to \infty$, $\widetilde{U}(z)\to I$.
 \item[(d)] At $z =0$, $\widetilde{U}(z)$ is singular in the following form:
\begin{eqnarray}
  \widetilde{U}(z)\left(\begin{matrix}
  1 & i e^{-\pi i \a}\\
   0 & 1
  \end{matrix}\right) =O\left(\begin{matrix}
  |z|^{-\a} &  |z|^{\a}\\
   |z|^{-\a} & |z|^{\a}
  \end{matrix}\right), \quad z \in {\Omega}_{l},\label{asy-hatu-origin-1-s3=0}\\
 \widetilde{U}(z)\left(\begin{matrix}
  1 & 0\\
  -i e^{-\pi i \a} & 1
  \end{matrix}\right) =O\left(\begin{matrix}
  |z|^{\a} &  |z|^{-\a}\\
   |z|^{\a} & |z|^{-\a}
  \end{matrix}\right), \quad z \in \Omega_{r}.\label{asy-hatu-origin-2-s3=0}
\end{eqnarray}
Note that the branch of $z^{\a}$ is chosen arbitrary.
\item[(e)] $\widetilde{U}(z)$ is bounded at $z=\pm \frac{i}{2}$.
\end{itemize}

Combining the property of the signature of $\re \hat{\theta}$ in Figure \ref{sign-hat-theta}, we know that the jump matrices in \eqref{hatU+hatU-1-s3=0} and \eqref{hatU+hatU-2-s3=0} tend to $I$ exponentially fast as $t\to +\infty$ for $z$ bounded away from the origin. As a consequence, $\widetilde{U}(z)$ can be approximated by the identity matrix for $z$ bounded away from the origin. However, because $\hat{\theta}(0)=0$, this no longer holds in the neighborhood of $z=0$. Therefore, we need to construct a parametrix in its neighbourhood.

\subsubsection*{Local parametrix at the origin}
Next, we make use of $M(z)$ in proposition \ref{prop-M} to construct the local parametrix. We consider the following RH problem for $\widetilde{M}$ (a rotation of the RH problem for $M$ when $s_3=0$):

\begin{itemize}
\item[(a)] $\widetilde{M}$ is analytic when $\eta \in \mathbb{C} \cut \Sigma_{\widetilde{M}}$, here $\Sigma_{\widetilde{M}} =\Gamma_{+} \cup \Gamma_{-}$ with $\Gamma_+=\{\eta \in \mathbb{C}: \textrm{arg}\, \eta = -\frac{11\pi}{6}\}$ and $\Gamma_-=\{\eta \in \mathbb{C}: \textrm{arg}\, \eta = -\frac{5\pi}{6}\}$ are two rays oriented to infinity; see Figure \ref{contour(tildeM)}.

    \begin{figure}[h]
  \centering
  \includegraphics[width=8cm]{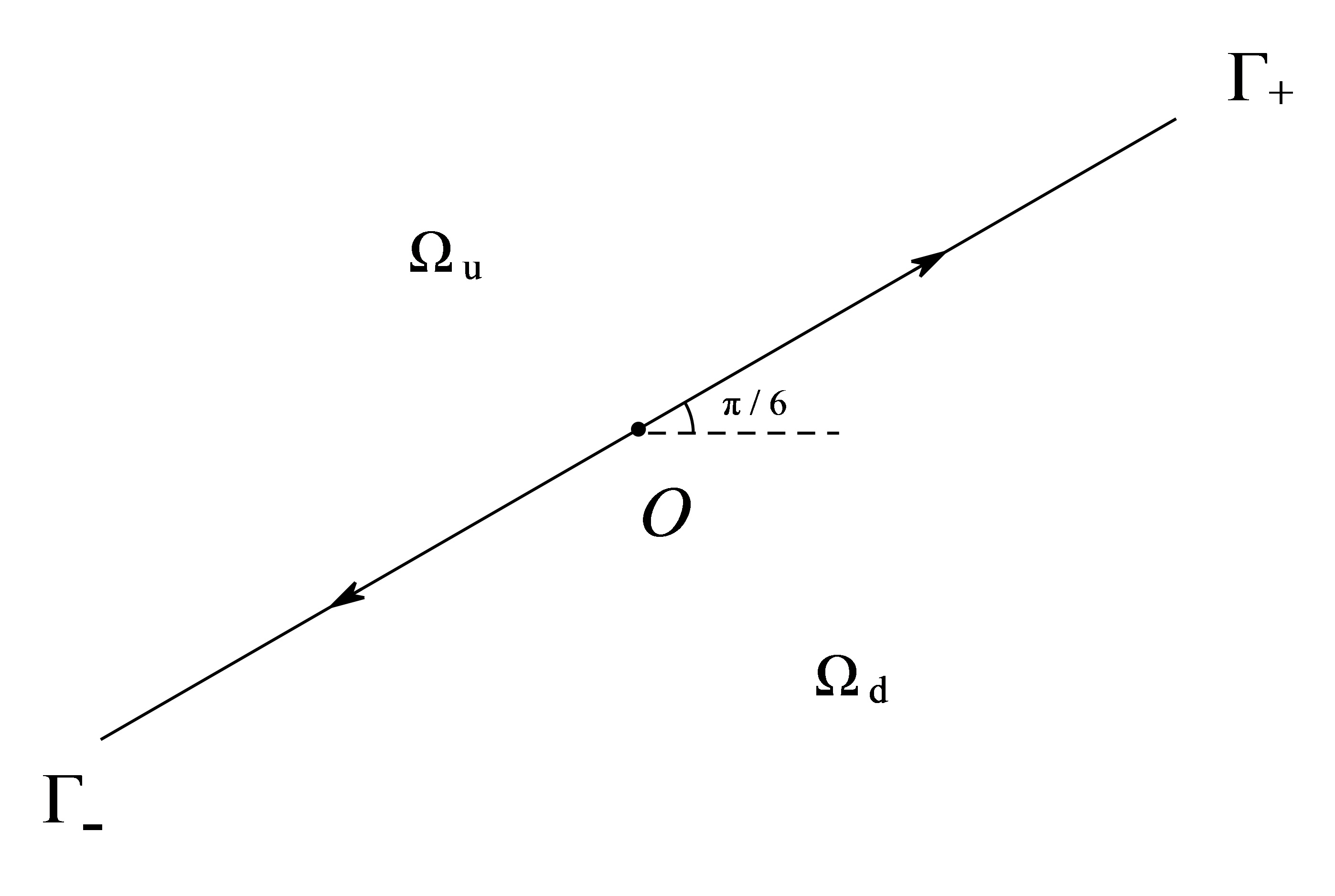}
  \caption{The contour $\Sigma_{\widetilde{M}}$.}\label{contour(tildeM)}
\end{figure}

\item[(b)] $\widetilde{M}$ satisfies the following jump relations:
\begin{equation} \label{tilde-M+M--1}
\widetilde{M}_{+}(\eta)= \widetilde{M}_{-}(\eta) \left(\begin{matrix}
    1 & 2 \sin (\pi \a)\\
    0 & 1
\end{matrix}\right), \qquad \textrm{for } \eta \in \Gamma_{+}
\end{equation}
and
\begin{equation} \label{tilde-M+M--2}
\widetilde{M}_{+}(\eta)= \widetilde{M}_{-}(\eta) \left(\begin{matrix}
    1 & 0\\
    -2 \sin (\pi \a) & 1
\end{matrix}\right), \qquad \textrm{for } \eta \in \Gamma_{-}.
\end{equation}
 \item[(c)] As $\eta \to \infty$, $\widetilde{M}(\eta)$ satisfies the asymptotics:
 \begin{equation}\label{tilde-M-infty-asy}
  \widetilde{M}(\eta)  = (I+O(\eta^{-1})) e^{i\eta\sigma_3}. 
 \end{equation}

 \item[(d)] At $\eta =0$, $\widetilde{M}(\eta)$ is singular in the following form:
\begin{eqnarray}
  \widetilde{M}(\eta)\left(\begin{matrix}
  1 & 0\\
  -i e^{-\pi i \a} & 1
  \end{matrix}\right) =O\left(\begin{matrix}
  |\eta|^{\a} &  |\eta|^{-\a}\\
   |\eta|^{\a} & |\eta|^{-\a}
  \end{matrix}\right), \quad \eta \in \Omega_{u},\label{tilde-M-origin-1}\\
  \widetilde{M}(\eta)\left(\begin{matrix}
  1 & i e^{-\pi i \a}\\
   0 & 1
  \end{matrix}\right) =O\left(\begin{matrix}
  |\eta|^{-\a} &  |\eta|^{\a}\\
   |\eta|^{-\a} & |\eta|^{\a}
  \end{matrix}\right), \quad \eta \in \Omega_{d},\label{tilde-M-origin-2}
\end{eqnarray}
where the branch cuts of $\eta^{\pm\a}$ are chosen along $\Gamma_+$.
\end{itemize}

From the construction of solution to RH problem for $M$ and the definition of $M_{1,2}$ in \eqref{M1}-\eqref{M2}, we know that
\begin{equation}\label{tilde-M-def}
\widetilde{M}(\eta)=\begin{cases}\D
M_2 (\eta),& \quad \eta \in \Omega_u\\
M_1 (\eta), & \quad \eta \in \Omega_d
\end{cases}
\end{equation}
solves the RH problem for $\widetilde{M}$.

Next, we introduce the conformal mapping in $U(0,\delta)$ with $\delta< 1/2$ a small positive constant:
\begin{equation}\label{eta-z-pos}
\eta(z)=-\frac{4}{3}z^{3}- z.
\end{equation}
It is easy to check that $\widetilde{M}(t\eta(z))e^{-it\eta(z)\sigma_3}$ solves the RH problem for $\widetilde{U}$ in the neighborhood of origin $U(0, \delta)$.

 \begin{rmk}
   Recall that the local parametrix near the origin in \cite{Its2003} is constructed in terms of Bessel functions of the first kind $J_{\a -\frac{1}{2}}$ and $J_{\frac{1}{2}-\a}$,  which are \emph{linearly dependent} when $\a-\frac{1}{2} \in \mathbb{Z}$. This is also the reason why  their asymptotic results do not hold for $\a-\frac{1}{2} \in \mathbb{Z}$. However, the functions $H^{(1,\,2)}_{\a -\frac{1}{2}}$ and $H^{(1,\,2)}_{\a+\frac{1}{2}}$ are linearly independent for any $\a \in \mathbb{R}$. Therefore, with the function $\widetilde{M}$ defined in \eqref{M1}-\eqref{M2} and \eqref{tilde-M-def}, we extend the asymptotics \eqref{u1} to any $\a \geq 0$ successfully.
 \end{rmk}

\subsubsection*{Error estimation}
Next, we set
\begin{equation}
\widetilde{T}(z)=\begin{cases}\D
\widetilde{M}(t\eta(z))e^{-it\eta(z)\sigma_3},& \quad |z|<\delta\\
I, & \quad |z|>\delta,
\end{cases}
\end{equation}
and consider the error function
\begin{equation}
\widetilde{R}(z)=\widetilde{U}(z) \widetilde{T}^{-1}(z).
\end{equation}
It is easy to deduce that $\widetilde{R}$ solves the following RH problem:
\begin{itemize}

\item[(a)] $\widetilde{R}_{+}(z)=\widetilde{R}_{-}(z) J_{\widetilde{R}}(z)$
with
\begin{equation}\label{jump-tilde-R}
J_{\widetilde{R}}(z)=\begin{cases}\D
\left(\begin{matrix}
    1 & 0\\
    -2 \sin (\pi \a)e^{2t\hat{\theta}} & 1
\end{matrix}\right), & \quad z \in \gamma_+,\,\, |z|>\delta, \\
\left(\begin{matrix}
    1 & 2 \sin (\pi \a) e^{-2t\hat{\theta}}\\
    0  & 1
\end{matrix}\right), & \quad z \in \gamma_-, \,\,|z|>\delta, \\
\widetilde{M}(t\eta(z))e^{-it\eta(z) \sigma_3}, & \quad |z|=\delta.
\end{cases}
\end{equation}

\item[(b)] $\widetilde{R}(z) \to I$ when $z \to \infty$.

\end{itemize}

The above jump matrix $J_{\widetilde{R}}(z)$ satisfies the following estimates:
\begin{equation}
||J_{\widetilde{R}}(z)-I||\leq \begin{cases}\D
c_1 e^{-c_2 t}, & \quad z \in \gamma_{\pm}, \,\, |z|>\delta,\\
c_1 t^{-1}, & \quad|z|=\delta,
\end{cases}
\end{equation}
where $c_{1,2,3}$ are certain positive constants. Then, the RH problem for $\widetilde{R}$ is solvable in terms of the following Cauchy integral
\begin{equation}\label{tilde-R-Cau-int}
\widetilde{R}_{-}(z)=I +\frac{1}{2\pi i}\int_{\Sigma_{\widetilde{R}}}\widetilde{R}_{-}(z')(J_{\widetilde{R}}(z')-I)\frac{\textrm{d} z'}{z' -z_{-}}, \quad z \in \Sigma_{\widetilde{R}}.
\end{equation}
Based on the standard procedure of norm estimation of the above Cauchy operator, it follows that, for sufficiently large $t$, the relevant integral operator is contracting, and the integral equation can be solved in $L^2(\Sigma_{\widetilde{R}})$ by iterations; see the standard arguments in \cite{Deift1999,Dei:Kri:McL:Ven:Zhou1999-2}.
Then, we have, as $t \to +\infty$,
\begin{equation}\label{est-tilde-R}
\widetilde{R}(z) =I + O(t^{-1}), \qquad \textrm{uniformly for } z\in \mathbb{C} \setminus \Sigma_{\widetilde{R}}.
\end{equation}

\subsubsection*{Asymptotics as $ x\to +\infty$ for $s_3=0$}
Based on the above analysis and \eqref{PII-RHP-rel}, we have
\begin{equation}
u_1(x;\a)=2\sqrt{x}\lim_{z\to \infty}z \widetilde{R}_{12}=-\frac{\sqrt{x}}{\pi i} \int_{\Sigma_{\widetilde{R}}} \biggl(\widetilde{R}_{-}(z')\left(J_{\widetilde{R}}(z')-I\right) \biggr)_{12} \textrm{d}z'.
\end{equation}
Combining \eqref{est-tilde-R}, the above equation gives the following formula
\begin{equation}
u_1(x;\a)=-\frac{\sqrt{x}}{\pi i} \int_{\Sigma_{\widetilde{R}}} \biggl(\left(J_{\widetilde{R}}(z)-I\right) \biggr)_{12} \textrm{d}z + O(x^{-5/2}).
\end{equation}
Using the definition of the jump matrix $H(z)$ in \eqref{jump-tilde-R}, \eqref{tilde-M-infty-asy} and \eqref{eta-z-pos}, we find from the above formula that
\begin{equation}
u_1(x;\a)\sim \frac{\a}{x}+O(x^{-5/2}), \quad x\to +\infty.
\end{equation}

Moreover, let $K$ denote the operator such that
\begin{equation}\label{operator-K}
K[f(z)]:=\frac{1}{2\pi i}\int_{\Sigma_{\widetilde{R}}}f(z)(J_{\widetilde{R}}(z')-I)\frac{\textrm{d} z'}{z' -z_{-}}, \quad z \in \Sigma_{\widetilde{R}},
\end{equation}
then \eqref{tilde-R-Cau-int} gives us
\begin{equation}
\widetilde{R}_{-}(z)-I = K I + K [\widetilde{R}_{-}(z)-I], \quad z \in \Sigma_{\widetilde{R}}.
\end{equation}
Since $||K||_{L_2}\leq c x^{-3/2}$ as $x\to +\infty$, we have
 $\widetilde{R}_{-} =\sum_{n=0}^{\infty}K^{n}I$, which is a converging iterative series and suggests $u_{1}(x;\a)$ possesses an asymptotic series in terms of negative degrees of $x^{1/2}$ when $x\to +\infty$. Then, the meromorphicity of $u_1(x;\a)$ leads to the following asymptotic expansion:
\begin{equation}\label{u1}
    u_1(x;\a)\sim \frac{\a}{x}\sum_{n=0}^{\infty}a_n x^{-3n},\quad x\to +\infty,
\end{equation}
with $a_n$ determined by the recurrence relation \eqref{a_n-rec-rel}.

\subsection{Asymptotics for $s_3\neq 0$}\label{sec:asy-pos}

Note that $u_1(x;\a)$ is not the solution studied in Theorem \ref{main-thm}, because the values of the stokes multiplies are different in \eqref{stokes-real-as} and \eqref{stokes-s3=0}. To get the solution in Theorem \ref{main-thm}, we now consider the RH problem for $\widehat{U}$ when $s_3\neq 0$ and look for the solution in terms of the following form:
\begin{equation}
 \widehat{U}(z)=X(z)\widetilde{U}(z).
\end{equation}
It is easy to check that $X(z)$ solves the following RH problem:

\begin{itemize}
\item[(a)] $X(z)$ is analytic in $\mathbb{C}\cut {\Sigma_X}$; see Figure \ref{contour(X)}.
\item[(b)] $X_{+}(z)=X_{-}(z) J_X(z)$
with
\begin{equation}\label{jump-X}
J_X(z)= \begin{cases}\D
\Phi_{\a,-}(z)\left(\begin{matrix}
    1 & 0\\
    -s_3 e^{2t\hat{\theta}} & 1
\end{matrix}\right)\Phi^{-1}_{\a,+}(z), &\quad z \in \gamma_+,\\
\Phi_{\a,-}(z)\left(\begin{matrix}
    1 & s_3 e^{-2t\hat{\theta}}\\
    0  & 1
\end{matrix}\right)\Phi^{-1}_{\a,+}(z), &\quad z \in \gamma_-.
\end{cases}
\end{equation}

   \begin{figure}[h]
  \centering
  \includegraphics[width=5cm]{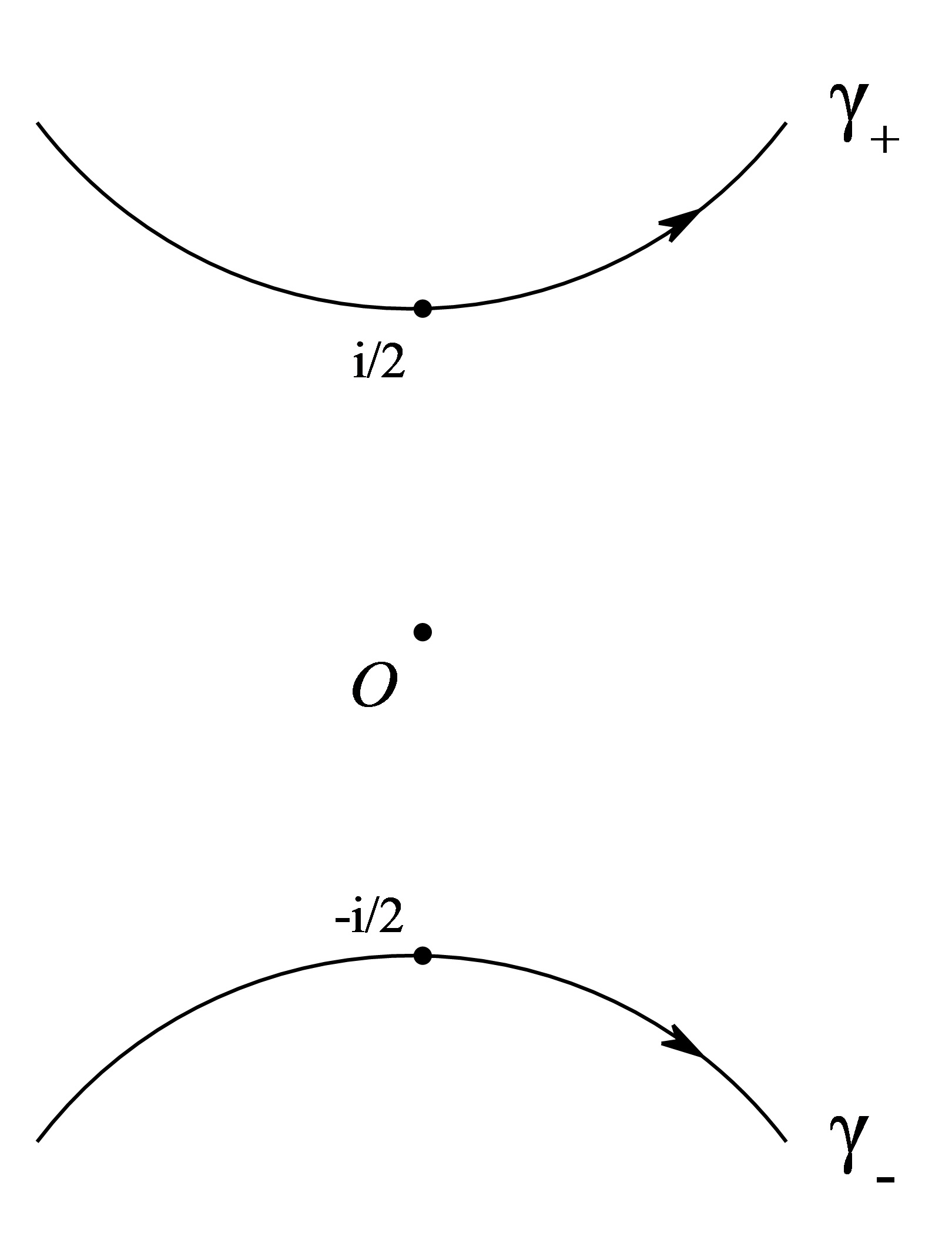}
  \caption{The contour $\Sigma_X$.}\label{contour(X)}
\end{figure}
\item[(c)] $X(z) \to I$ when $z \to \infty$.
\item[(d)] $X(z)$ is bounded at the origin.
\end{itemize}

It is interesting to note that $X(z)$ and the function $X(\l)$ in \cite[P. 378]{Its2003} are simply related through the following transformation
\begin{equation}
X(z) = X(\l(z))\exp(t\hat{\theta}\sigma_3).
\end{equation}
Then, following similar analysis as that used to derive  \cite[Thm. 2.2]{Its2003}, we have the following theorem for $\a\geq 0$:
\begin{theorem}\label{thm-extension}
There exists a family of solutions of the PII equation satisfying the following asymptotics:
\begin{equation}\label{u-u1}
u(x;\a)= u_1(x;\a)-\frac{is_3}{2\sqrt{\pi}}x^{-1/4}(1+O(x^{-3/4})) \quad \textrm{as\,\,} x\to +\infty,
\end{equation}
where $\a\in \mathbb{R}$ and $u_1(x;\a)\sim \a/x$ is the solution of the PII equation shown in \eqref{u1}.
\end{theorem}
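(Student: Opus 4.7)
The plan is to carry out the Deift--Zhou nonlinear steepest descent analysis for the RH problem for $X(z)$, essentially transcribing the argument of \cite[Sec.~2]{Its2003}; the novelty is only that the $\a$-dependent difficulty at the origin has already been absorbed into $\widetilde U$ via $\widetilde M$ from Proposition \ref{prop-M}, so no restriction on $\a$ is inherited. By \eqref{est-tilde-R}, on $\gamma_\pm \cap \{|z|>\delta\}$ one has $\Phi_{\a,\pm}(z) = I + O(t^{-1})$, so the conjugation in \eqref{jump-X} turns $J_X$ into the bare triangular matrix with off-diagonal entry $\mp s_3 e^{\pm 2t\hat\theta}$, up to an $O(t^{-1})$ multiplicative perturbation. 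Since $X$ is bounded at the origin and $X \to I$ at infinity, the only genuinely nontrivial pieces of $X$ come from neighbourhoods of the saddle points of $\hat\theta$.

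Inspecting the signature of $\re\hat\theta$ in Figure \ref{sign-hat-theta}, the factor $e^{2t\hat\theta}$ on $\gamma_+$ (resp.\ $e^{-2t\hat\theta}$ on $\gamma_-$) attains its unique maximum of modulus $e^{-2t/3}$ at the stationary point $z_+ = i/2$ (resp.\ $z_- = -i/2$), where $\hat\theta(z_\pm) = \mp 1/3$, and decays exponentially faster off any fixed neighbourhood of $z_\pm$. The nontrivial contribution therefore comes from local parametrices $P_\pm(z)$ near $z_\pm$: rescale via $\zeta = c\sqrt{t}\,(z - z_\pm)$ with $c$ normalising the quadratic part of $\hat\theta$, so that $2t\hat\theta(z)\pm 2t/3 = \mp \zeta^2 + O(t^{-1/2})$. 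The local RH problem reduces to the standard Gaussian model solvable in terms of parabolic cylinder (or Airy) functions, whose matching coefficient at $\zeta \to \infty$ pins down the $(1,2)$-entry contribution to $X_1$ as a constant multiple of $s_3\,t^{-1/2}\,e^{-2t/3}$, the normalisation giving exactly $-is_3/(2\sqrt{\pi})$ after accounting for the rescaling factor $c$.

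Assemble $E(z) = X(z) P(z)^{-1}$, where $P$ equals the local parametrices on small discs of fixed radius around $z_\pm$ and equals $I$ outside. The jumps of $E$ are $I + O(t^{-1/2})$ on the matching circles and exponentially small elsewhere, so the standard small-norm Cauchy argument (cf.\ \eqref{tilde-R-Cau-int}--\eqref{est-tilde-R}) yields $E(z) = I + O(t^{-1/2})$ uniformly in $z$. Extracting the $(1,2)$-entry of $X_1$ in $X(z) = I + X_1 z^{-1} + O(z^{-2})$ at $z = \infty$, restoring the factor $\sqrt{x}$ from the rescaling $\l = x^{1/2}z$, and adding the result to $u_1(x;\a)$ from Section \ref{sec:asy-pos-s3=0} through the formula \eqref{PII-RHP-rel} produces the claimed expansion.

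The main obstacle is bookkeeping rather than genuinely new analysis: one must choose $\delta$ (the radius of the origin parametrix $\widetilde T$) small enough that the saddle-point discs at $z_\pm = \pm i/2$ are disjoint from $\{|z| \le \delta\}$, verify that the $O(t^{-1})$ error from $\widetilde R$ propagating through the conjugation in \eqref{jump-X} produces only a subleading multiplicative $O(x^{-3/4})$ correction compatible with the stated remainder, and confirm that the parabolic cylinder model at $z_\pm$ delivers the precise normalisation constant $1/(2\sqrt{\pi})$. All of these steps are completely $\a$-independent, so, having handled the origin through Proposition \ref{prop-M}, the extension of the Its--Kapaev asymptotics to every real $\a$, including $\a - 1/2 \in \mathbb{Z}$, follows.
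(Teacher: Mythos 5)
Your proposal follows the same route as the paper: introduce $\widehat{U}=X\widetilde{U}$ so that the $s_3$-dependence sits in $X$, observe that the restriction $\a-\tfrac12\notin\mathbb{Z}$ in \cite{Its2003} entered only through the Bessel-function origin parametrix, now replaced by the Hankel-function parametrix $\widetilde{M}$ of Proposition~\ref{prop-M}, and then run the Its--Kapaev steepest-descent analysis for $X$. The paper does exactly this and no more: it records the RH problem for $X$, the identification $X(z)=X(\lambda(z))e^{t\hat\theta\sigma_3}$ with the $X(\lambda)$ of \cite[p.~378]{Its2003}, and cites \cite[Thm.~2.2]{Its2003} for the remainder. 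So in structure you and the paper agree.

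Where your sketch is off is the local analysis at $z_\pm=\pm i/2$. Since $\re\,\hat\theta(\pm i/2)=\mp\tfrac13$, on the steepest-descent deformation of $\gamma_\pm$ the conjugated jump $J_X$ is already $I+O(e^{-2t/3})$ \emph{uniformly}; no parabolic-cylinder or Airy local parametrix is needed, and the small-norm Cauchy argument applies directly, with the leading $(1,2)$-entry of $X_1$ obtained from a Laplace evaluation of $\frac{1}{2\pi i}\int_{\Sigma_X}(J_X-I)\,dz$ localized at $z_\pm$. Consequently your claim that the error jumps on the matching circles are $I+O(t^{-1/2})$ is incorrect -- they are exponentially small, of size $O(t^{-1/2}e^{-2t/3})$ -- and with your stated estimate $E(z)=I+O(t^{-1/2})$ you would only get $(X_1)_{12}=O(t^{-1/2})$, which does not reproduce the exponentially small correction the theorem asserts. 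You appear to be importing the picture from Section~\ref{sec-non-anal}, where $\tilde\theta(\pm\tfrac12)$ is purely imaginary, the jump at the saddle is $O(1)$, and the parabolic cylinder model $Z(\zeta)$ genuinely enters; that is the $x\to-\infty$ regime, not this one. These are defects in your transcription of \cite{Its2003}, not in the overall strategy: the essential observation of the theorem -- that replacing the $J_{\pm(\a-1/2)}$ origin parametrix by $\widetilde{M}$ built from linearly independent Hankel functions removes the exclusion of $\a-\tfrac12\in\mathbb{Z}$ -- is captured correctly in your proposal and is precisely what the paper's proof rests on.
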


For $s_1=-\sin(\pi \a) - ki$, $s_3=-\sin(\pi \a) + ki$ and $k, \a \in \mathbb{R}$, we know from Remark \ref{rmk-real-reduction} that $u(x;\a)$ is real but $u_1(x;\a)$ is not real for real $x$. Thus, one needs to take the real part of \eqref{u-u1} to get the asymptotics of $u(x;\a)$. As a result, we obtain the asymptotic behavior in \eqref{asy-pos}.

\section{Nonlinear steepest descent analysis as $x\to -\infty$}\label{sec-non-anal}

 The analysis in the first several steps is literally the same as that in \cite{Dai:Hu2017}, where we studied the asymptotics for the AS solutions of PII under the conditions $s_1=\bar{s}_3$ and $1-s_1s_3>0$. Here we repeat some of the arguments to make the paper self-contained. We first take the change of variable $\l(z)=(-x)^{1/2}z$, so $\theta(\l) $ in \eqref{Psi-infty-asy} is rewritten by $t\tilde{\theta}(z)$ with
 \begin{equation}
   \tilde{\theta}(z) := i(\frac{4}{3}z^3 -z) \quad \textrm{and} \quad  t=(-x)^{3/2}.
 \end{equation}
 Then, under the normalization $$U(z) = \Psi_\a(\l(z))\exp(t \tilde{\theta}(z)\sigma_3),$$ we have $U(z)\to I$ as $z\to \infty$. Similar to the case when $x\to +\infty$, the jumps $S_k$ in \eqref{Psi-jumps} turn into $e^{-t\tilde{\theta}(z) \sigma_3}  S_j e^{t\tilde{\theta}(z) \sigma_3} $. Also, this transformation doesn't change the diagonal entries, but multiplies the upper and lower triangular entries of $S_j$ by $e^{\mp 2t \tilde\theta (z)}$, respectively. Next, we check the properties of $\re \tilde\theta (z) $ in the complex-$z$ plane. It is easy to see that $\tilde{\theta}(z)$ has two stationary points at $z_{\pm}= \pm \frac{1}{2}$ and the property of the signature of $\re \tilde{\theta}(z)$ is shown in Figure \ref{sign-theta}.
\begin{figure}[h]
  \centering
  \includegraphics[width=10cm]{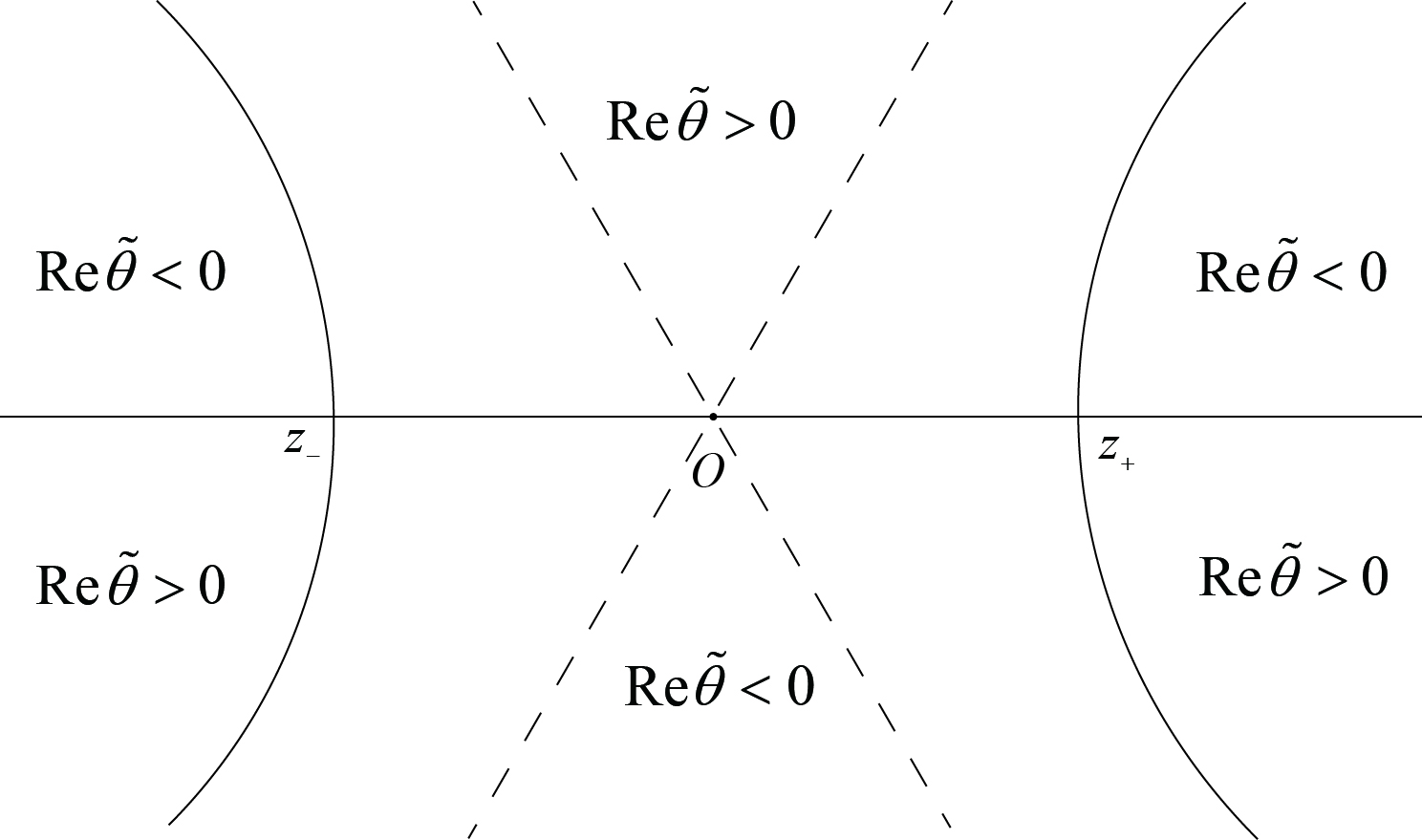}
  \caption{The signature properties of $\re \tilde{\theta}(z)$, where the dashed lines are the rays  $ \{z \in \mathbb{C}: \textrm{arg}\, z = \frac{k\pi}{3}, k = 1,2,4,5\}$. }\label{sign-theta}
\end{figure}
Moreover, we recall that in each sector bounded by the rays $\gamma_j$ in Figure \ref{contour-Arno}, $\Psi_\a(\l)$ is indeed an analytic function with only a branch point at $\l = 0$. Since $z$ is just a rescaling of the variable $\l$, we can deform the original contour $\Sigma$ such that the new one
is in accordance with the signature table of $\re \tilde{\theta}(z)$ in Figure \ref{sign-theta}. As a result, the original RH problem is transformed into the RH problem for $U$ with the jump conditions shown in Figure \ref{contour(sd)}.
\begin{figure}[h]
  \centering
  \includegraphics[width=14cm]{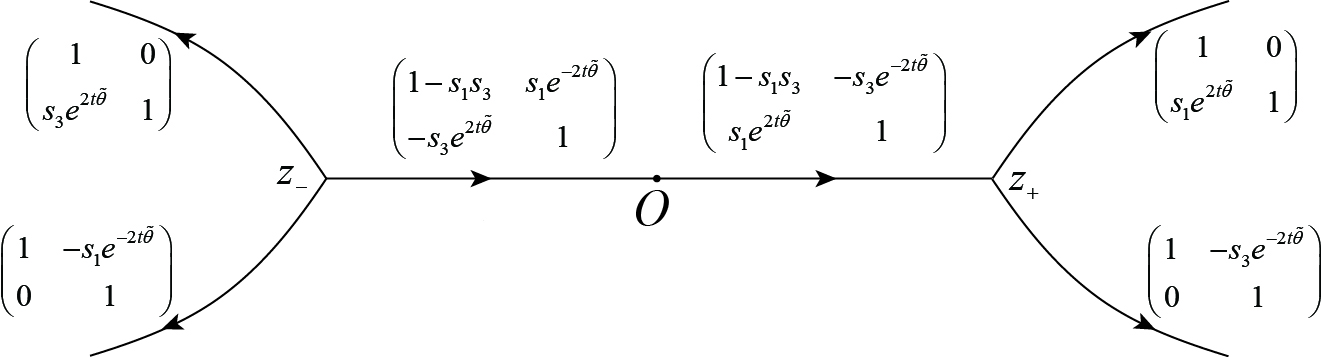}
  \caption{The steepest descent contour $\Sigma_U$ with corresponding jump matrices.}\label{contour(sd)}
\end{figure}
\begin{figure}[h]
  \centering
  \includegraphics[width=14cm]{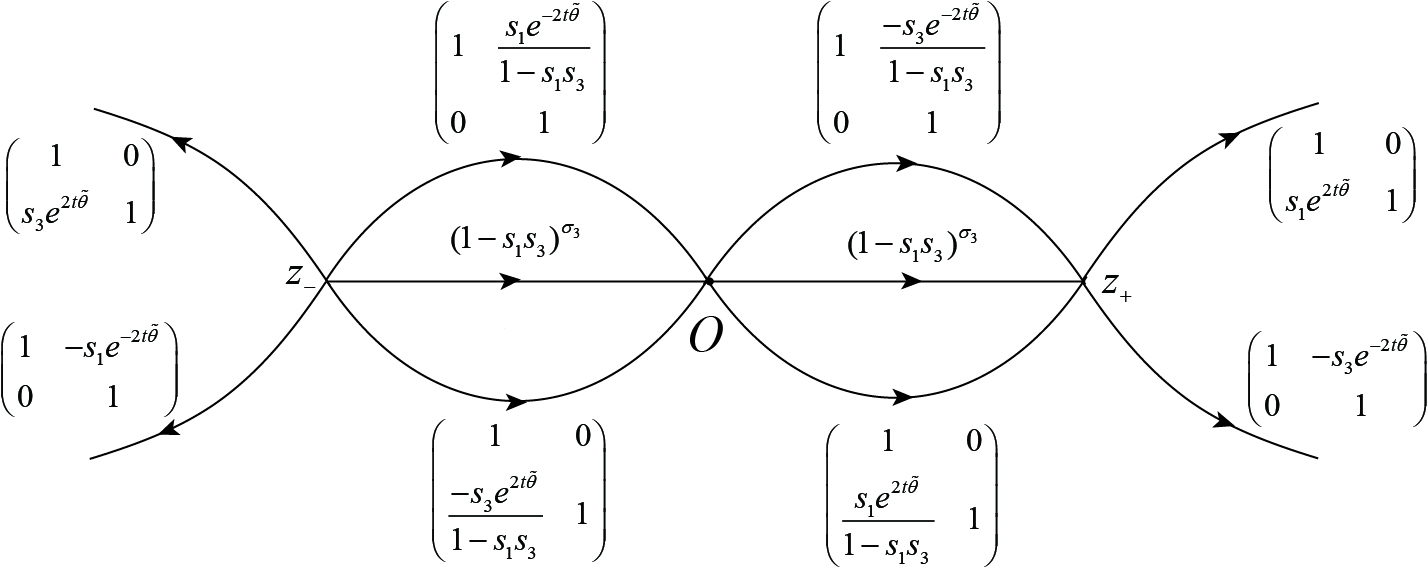}
  \caption{The contour $\Sigma_T$ with corresponding jump matrices.}\label{contour(LD)}
\end{figure}

Secondly, on $[z_-, z_+]$, we factorize the above two matrices by LDU-decomposition:
\begin{eqnarray}
\left(
  \begin{array}{cc}
  1-s_1 s_3 & -s_3 e^{-2t \tilde \theta}\\
    s_1 e^{2t \tilde \theta} & 1
  \end{array}
    \right) &=&  \left(
  \begin{array}{cc}
  1 & 0\\
    \frac{s_1 e^{2t\tilde{\theta}(z)}}{1-s_1 s_3} & 1
  \end{array}
    \right)  \left(
  \begin{array}{cc}
  1-s_1 s_3 & 0\\
    0 & \frac{1}{1-s_1 s_3}
  \end{array}
    \right) \left(
  \begin{array}{cc}
  1 & -\frac{s_3 e^{-2t\tilde{\theta}(z)}}{1-s_1 s_3}\\
    0 & 1
  \end{array}
    \right) \nonumber\\
  &:=&S_{L_1}S_{D}S_{U_1},
\end{eqnarray}
and
\begin{eqnarray}
\left(
  \begin{array}{cc}
  1-s_1 s_3 & s_1 e^{-2t \tilde \theta}\\
    -s_3 e^{2t \tilde \theta} & 1
  \end{array}
    \right)&=& \left(
  \begin{array}{cc}
  1 & 0\\
    -\frac{s_3 e^{2t\tilde{\theta}(z)}}{1-s_1 s_3} & 1
  \end{array}
    \right)  \left(
  \begin{array}{cc}
  1-s_1 s_3 & 0\\
    0 & \frac{1}{1-s_1 s_3}
  \end{array}
    \right) \left(
  \begin{array}{cc}
  1 & \frac{s_1 e^{-2t\tilde{\theta}(z)}}{1-s_1 s_3}\\
    0 & 1
  \end{array}
    \right)\nonumber \\
    &:=&S_{L_2}S_{D}S_{U_2}.
\end{eqnarray}
From the above factorization, we perform an \lq\lq opening of lenses \rq\rq; see Figure \ref{contour(LD)}. Then we introduce the function
 \begin{equation}\label{def-T}
T(z) := \begin{cases}\D
U(z), & \textrm{for $z$ outside the two lens shaped regions},\\
U(z)S^{-1}_{U_1}, & \textrm{for $z$ in the upper part of the right lens shaped region},\\
U(z)S_{L_1}, & \textrm{for $z$ in the lower part of the right lens shaped region},\\
U(z)S^{-1}_{U_2}, & \textrm{for $z$ in the upper part of the left lens shaped region},\\
U(z)S_{L_2}, & \textrm{for $z$ in the lower part of the left lens shaped region}.
\end{cases}
\end{equation}
It is easy to check that $T$ satisfies the RH problem shown in \ref{contour(LD)}.

Note that, according to the signature of $\re \tilde\theta (z)$, we know that the jump matrices $J_{T}$ tend to the identity matrix exponentially when $t\to +\infty$ as long as $z$ stays away from the segment $[z_{-}, z_{+}]$. As a result, the key contribution to the asymptotic expansion of $T$ comes from $[z_{-}, z_{+}]$ and the neighborhoods of $z_{\pm}$.

\subsection{Global parametrix on $[z_{-}, z_{+}]$}

As $t\to +\infty$, consider the segment $[z_{-}, z_{+}]$ with the same orientation shown in Figure \ref{contour(LD)}, we obtain a function $N(z)$ satisfies:

\begin{itemize}
  \item[(a)] $N(z)$ is analytic when $z\in \mathbb{C} \setminus [z_-, z_+]$.

  \item[(b)] On $[z_-, z_+]$,
\begin{align}
N_{+}(z)=N_{-}(z)(1-s_1 s_3)^{\sigma_3}.
\end{align}

\item[(c)]  $N(z)= I+O(\frac{1}{z}) $, \qquad as $z\to \infty$.

\end{itemize}

According to \cite{Dai:Hu2017,Fokas2006}, we solve the above RH problem by defining
\begin{equation} \label{parametrix-global}
  N(z)=\left(\frac{z-z_{-}}{z-z_{+}}\right)^{\nu \sigma_3}, \qquad \textrm{with } \nu= -\frac{1}{2\pi i} \ln(1-s_1 s_3).
\end{equation}
Here, the branch cut is chosen such that $\arg (z- z_{\pm}) \in (-\pi, \pi)$, then $\left(\frac{z-z_{-}}{z-z_{+}}\right)^{\nu} \to 1$ as $ z\to \infty$.

\subsection{Local parametrices near $z = \pm \frac{1}{2}$ } \label{sec:local-z+}

In this subsection, we seek a function $Q^{(r)}(z)$ satisfying the same problem as $T$ in $U(z_{+}, \delta)$, a neighborhood of $z_+=\frac{1}{2}$.
\begin{itemize}
  \item[(a)] $Q^{(r)}(z)$ is analytic when $z\in U(z_{+}, \delta) \setminus \Sigma_T$.

  \item[(b)] When $z \in U(z_{+}, \delta)\cap \Sigma_T$, $Q^{(r)}(z)'s$ jump matrices are the same as that of $T(z)$; see Figure \ref{contour(psi-r)}.

  \begin{figure}[h]
  \centering
  \includegraphics[width=8cm]{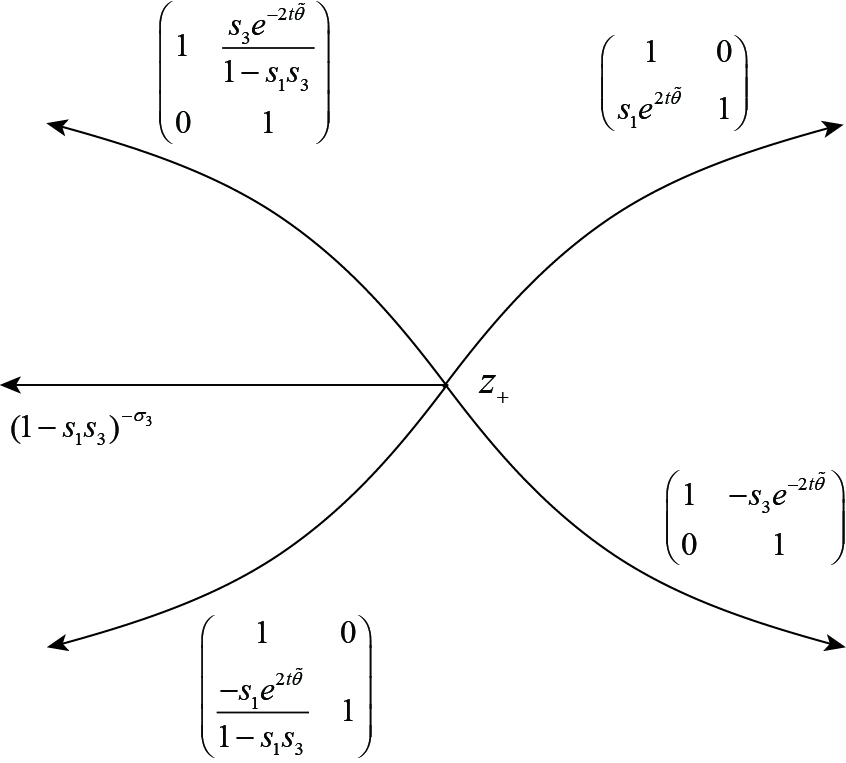}
  \caption{The contour  $\Sigma_T $ in $U(\frac{1}{2}, \delta)$ and corresponding jump matrices.}\label{contour(psi-r)}
\end{figure}

  \item[(c)]On $\partial U(z_{+}, \delta) = \{ z \in \mathbb{C}: \ |z - z_{+}| = \delta \}$,
  \begin{equation} \label{psir-psid-match}
   Q^{(r)}(z) [N(z) ]^{-1} = I + O (1/t), \qquad \textrm{as } t \to + \infty.
  \end{equation}

  \item[(d)] As $z \to z_{+}$, $Q^{(r)}(z)=O((z-z_{+})^{-1})$.
\end{itemize}

The above RH problem of $Q^{(r)}(z)$ is similar to that of $P^{(r)}(z)$ in \cite[Sec. 3.4]{Dai:Hu2017}, except that $P^{(r)}(z)$ is bounded at $z_{+}$ in \cite{Dai:Hu2017}, while $z_{+}$ is a simple pole of $Q^{(r)}(z)$. The difference is caused by the matching condition \eqref{psir-psid-match} and more details can be seen later in the current section.

It is natural to recall the construction of $P^{(r)}(z)$; see \cite[Sec. 3.4]{Dai:Hu2017} and Fokas et al. \cite[Sec. 9.4]{Fokas2006}, since the jumps keep the same by multiplying a function, which has poles, at the left hand side of $P^{(r)}(z)$. For the self-consistency, we will repeat some definitions of functions in the rest of this section.

We define a conformal mapping in $U(z_{+}, \delta)$ in the following form:
\begin{align} \label{zeta-defn}
\zeta(z):=2\sqrt{-\tilde{\theta}(z)+\tilde{\theta}(z_+)} =  2  \, \sqrt{-\frac{4i}{3} z^3+ iz -\frac{i}{3}},
\end{align}
with $\arg{(z-z_{+})} \in (-\pi, \pi)$. Then we have
\begin{equation} \label{zeta-z+-nei}
\zeta(z) \sim e^{3i\pi/4} 2\sqrt{2} \, (z-\frac{1}{2}), \qquad \textrm{as } z \to z_{+},
\end{equation}
with $z_{+}=\frac{1}{2}$.
In \cite{Dai:Hu2017, Fokas2006}, the authors solve $P^{(r)}(z)$ by defining
\begin{equation}\label{Pr-def}
  P^{(r)}(z)= \beta(z)^{\sigma_3} \left(\frac{-h_1}{s_3}\right)^{-\sigma_3 /2} e^{it\sigma_3 /3} 2^{-\sigma_3 /2}\left(\begin{matrix}
\sqrt{t}\,\zeta(z) & 1\\
1 & 0
\end{matrix}\right) Z(\sqrt{t}\,\zeta(z)) e^{t\tilde{\theta}(z)\sigma_3}  \left(\frac{-h_1}{s_3}\right)^{\sigma_3 /2},
\end{equation}
where $h_1$ is a constant with
\begin{equation}\label{h1-def}
 h_1 = \frac{\sqrt{2\pi}}{\Gamma(-\nu)}e^{i \pi \nu}
\end{equation}
and $\beta(z)$ is holomorphic in $U(z_{+}, \delta)$ with
\begin{align} \label{beta-defn}
\beta(z):=\left(\sqrt{t}\,\zeta(z)\frac{z+1/2}{z-1/2}\right)^{\nu }, \quad  \quad  \beta(z_+) = (8t)^{\nu /2}e^{3i\pi \nu /4}.
\end{align}
Recall that $Z(\zeta)$ is a function defined based on the parabolic cylinder functions, the exact definition is as follows:
\begin{equation} \label{zrh-def}
Z(\zeta):= \begin{cases}\D
Z_0(\zeta), &\arg \zeta \in (-\frac{\pi}{4}, 0),\\
Z_k(\zeta), &\arg \zeta \in (\frac{k-1}{2} \pi, \frac{k}{2}\pi), \ k = 1,2,3,\\
Z_4(\zeta), &\arg \zeta \in (\frac{3\pi}{2}, \frac{7\pi}{4}),
\end{cases}
\end{equation}
where
\begin{equation} \label{z0-def}
Z_0(\zeta)= 2^{-\sigma_3/2} \left(
  \begin{matrix}
  D_{-\nu-1}(i\zeta) & D_{\nu}(\zeta)\\
   \frac{d}{d\zeta}D_{-\nu-1}(i\zeta) & \frac{d}{d\zeta}D_{\nu}(\zeta)
  \end{matrix}
    \right) \left(
  \begin{matrix}
  e^{\frac{i\pi}{2}(\nu + 1)} & 0\\
    0 & 1
  \end{matrix}
    \right),
\end{equation}
and
\begin{eqnarray}
Z_{k+1}(\zeta) = Z_k (\zeta)H_k, \qquad k = 0,1,2,3.
\end{eqnarray}
The constant matrices $H_j$ above are defined as follows
\begin{equation} \label{def-h-0123}
  H_0= \left(
  \begin{matrix}
  1 & 0\\
    h_0 & 1
  \end{matrix}
    \right), \quad H_1= \left(
  \begin{matrix}
 1 & h_1\\
    0 & 1
  \end{matrix}
    \right), \quad H_{k+2}=e^{i\pi (\nu+\frac{1}{2})\sigma_3}H_k e^{-i\pi (\nu+\frac{1}{2})\sigma_3}, \quad k = 0,1,
\end{equation}
with
\begin{equation}\label{def-h-01}
  h_0 = -i\frac{\sqrt{2\pi}}{\Gamma(\nu+1)} \qquad \textrm{and} \qquad h_1 = \frac{\sqrt{2\pi}}{\Gamma(-\nu)}e^{i \pi \nu}.
\end{equation}
Note that $Z_k(\zeta)$'s defined above are indeed entire functions of $\zeta$.
Let $J_{Z}$ denotes the jump matrices $H_k$, then $Z(\zeta)$ satisfies the following RH problem:
\begin{itemize}
  \item[(a)] $Z(\zeta)$ is analytic for $ \zeta \in \mathbb{C} \cut \Sigma_Z$;

  \item[(b)] $Z_+(\zeta)=Z_{-}(\zeta) J_{Z}(\zeta)$ for $\zeta \in \Sigma_Z$, where the contour $\Sigma_Z$ and the jump $J_Z(\zeta)$ are depicted in Figure \ref{contour(Z)};

  \begin{figure}[h]
  \centering
  \includegraphics[width=8cm]{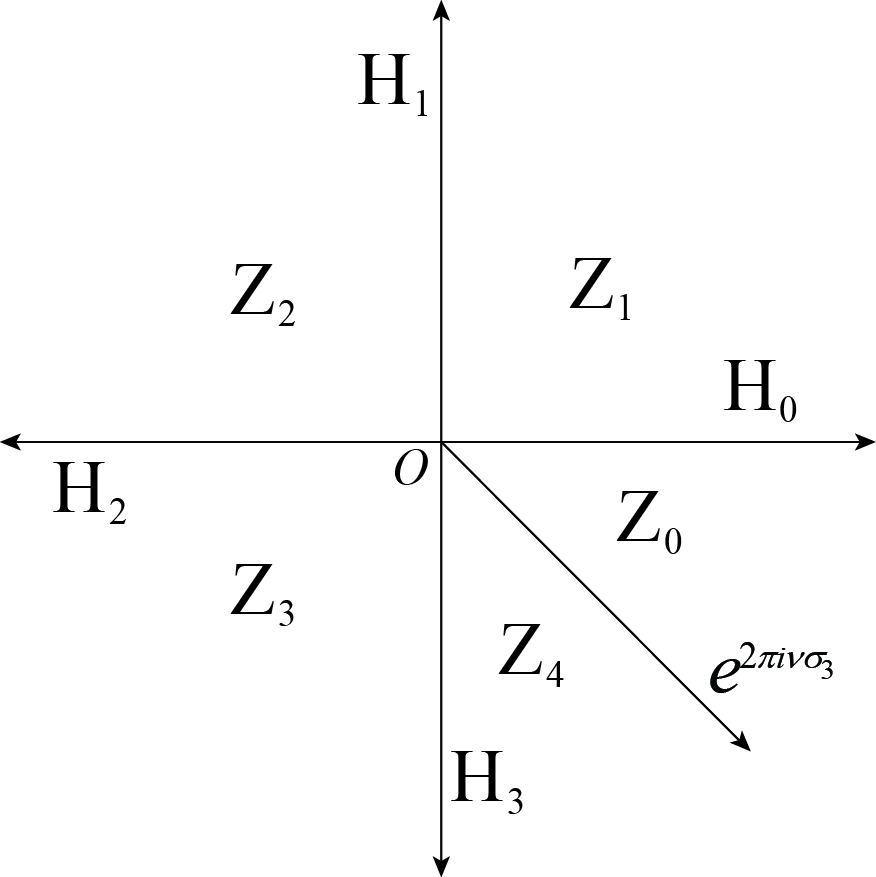}
  \caption{The contour $\Sigma_Z$ and associated jump matrices for the RH problem of $Z(\zeta)$.}\label{contour(Z)}
  \end{figure}

  \item[(c)] As $\zeta \to 0$, $Z(\zeta)$ is bounded;

  \item[(d)] As $\zeta \to \infty$, we have
    \begin{equation} \label{Z-large}
    Z(\zeta)= \zeta^{-\sigma_3 /2}\frac{1}{\sqrt{2}}\left(\begin{matrix}
    1 & 1\\
    1 & -1
    \end{matrix}\right)\left(I+\left(\begin{matrix}
    \frac{\nu(\nu+1)}{2\zeta^2} & \frac{\nu}{\zeta^2}\\
    \frac{\nu+1}{\zeta^2} & \frac{-\nu(\nu+1)}{2\zeta^2}
    \end{matrix}\right)+O(\zeta^{-4})\right)e^{(\frac{\zeta^2}{4}-(\nu +\frac{1}{2})\ln \zeta)\sigma_3}.
    \end{equation}

\end{itemize}

From \eqref{Z-large}, we know that $P^{(r)}(z)$ satisfies the asymptotic behavior as $t\to +\infty$ in the form of
\begin{equation} \label{prz-large-t}
P^{(r)}(z)= \left(\begin{matrix}
 1 & -\frac{\nu s_3}{h_1}e^{\frac{2it}{3}} \beta^2(z)\frac{1}{\sqrt{t}\zeta}\\
-\frac{h_1}{s_3}e^{-\frac{2it}{3}} \beta^{-2}(z)\frac{1}{\sqrt{t}\zeta} & 1
\end{matrix} \right)\times (I+O(1/t))N(z),
\end{equation}
which holds uniformly as $z \in \partial U(z_+, \delta) $.

Recall that in \cite{Dai:Hu2017}, $\nu= -\frac{1}{2\pi i} \ln(1-s_1 s_3)$ is purely imaginary under the condition $|s_1|=|s_3|<1$. Then, we have, as $t \to \infty$,
$\beta^{\pm 2}(z)\frac{1}{\sqrt{t}\zeta}=O(t^{-1/2})$ and $P^{(r)}(z)N^{-1}(z)\to I.$
But in the present case $|s_1|=|s_3|>1$,
\begin{equation}
\nu= -\frac{1}{2\pi i} \ln(1-s_1 s_3)\equiv \nu_0 - \frac{1}{2}, \qquad \textrm{with } \nu_{0}\in i\mathbb{R},
\end{equation}
is not purely imaginary any more. This change leads to that
\begin{equation}
P^{(r)}(z)N^{-1}(z)\not\to I, \qquad \textrm{as } t \to +\infty.
\end{equation}
Precisely, we denote
\begin{equation}\label{tilde-alpha-beta}
\tilde{\beta}(z)=\left(\zeta(z)\frac{z+1/2}{z-1/2}\right)^{2\nu_0 }\quad \textrm{and} \quad \tilde{\a}(z)=\frac{z+1/2}{z-1/2}.
\end{equation}
Then, we have, in $U(z_{+}, \delta)$, a neighborhood of $z_{+}= \frac{1}{2}$
\begin{equation}
\beta^2(z)\sqrt{t}\zeta(z) =\frac{\tilde{\beta}(z)}{\tilde{\a}(z)}= O(1), \qquad \textrm{as } t \to +\infty.
\end{equation}
This gives us
\begin{eqnarray}\label{Pr-Er-rel}
P^{(r)}(z)&=& \left(\begin{matrix}
 1 & -\frac{\nu s_3}{h_1}e^{\frac{2it}{3}} \frac{\tilde{\beta(z)}}{\tilde{\a}(z)}\frac{1}{t\zeta^2(z)}\\
 -\frac{h_1}{s_3}e^{-\frac{2it}{3}} \frac{\tilde{\a}(z)}{\tilde{\beta}(z)} & 1
\end{matrix} \right)\times (I+O(1/t))N(z)\nonumber\\
&=&\left(\begin{matrix}
 1 & 0\\
-\frac{h_1}{s_3}e^{-\frac{2it}{3}} \frac{\tilde{\a}(z)}{\tilde{\beta}(z)} & 1
\end{matrix} \right)\times (I+O(1/t))N(z)
\end{eqnarray}
Finally, we construct $Q^{(r)}(z)$ in the form:
\begin{equation}\label{Qr-def}
Q^{(r)}(z)=E_r(z)P^{(r)}(z),
\end{equation}
where $P^{(r)}(z)$ is defined in \eqref{Pr-def} and $E_r(z)$ has a simple pole at $z=\frac{1}{2}$ with
\begin{equation}\label{Er-def}
E_r(z)=\left(\begin{matrix}
 1 & 0\\
\frac{h_1}{s_3}e^{-\frac{2it}{3}} \frac{\tilde{\a}(z)}{\tilde{\beta}(z)} & 1
\end{matrix}\right),
\end{equation}
and $\tilde{\a}(z)$ and $\tilde{\beta}(z)$ defined in \eqref{tilde-alpha-beta}.

Upon the RH problem of $P^{(r)}(z)$ in \cite{Dai:Hu2017}, it is easy to check that $Q^{(r)}(z)$ satisfies exactly the RH problem stated at the beginning.

At $z=z_{-} = - 1/2$, due to the symmetry property of $T(z)$ depicted in \ref{contour(LD)}, the parametrix $Q^{(l)}(z)$ can be constructed similarly by
\begin{equation} \label{Qrl-realtion}
Q^{(l)}(z)=\sigma_2 Q^{(r)}(-z) \sigma_2.
\end{equation}

\subsection{Local parametrix near the origin}\label{Local parametrix near the origin}\label{sec-paramx-0}

In this section, we will seek a parametrix $Q^{(0)} (z)$ in $U(0, \delta)$ which solves the RH problem as follows:
\begin{itemize}
  \item[(a)] $Q^{(0)}(z)$ is analytic when $z \in U(0, \delta) \setminus \Sigma_T$;

  \item[(b)] When $z\in U(0, \delta)\cap \Sigma_T$, $Q^{(0)}(z)$ satisfies the same jumps as $T(z)$; see Figure \ref{contour(LD)};

  \item[(c)] On $\partial U(0, \delta) = \{ z\in \mathbb{C} : \ |z| = \delta \}$, we have
  \begin{equation} \label{psi0-psid-match}
    Q^{(0)}(z) [N(z) ]^{-1} = I + O (1/t) \qquad \textrm{as } t \to + \infty;
  \end{equation}

  \item[(d)] $Q^{(0)}(z)$ has the following singularities at $z =0$, :
  \begin{eqnarray}\label{U-origin-asy}
  Q^{(0)}(z)\left(\begin{matrix}
  1 & \frac{s_3 +i e^{-\pi i \a}}{1-s_1 s_3}\\
   0 & 1
  \end{matrix}\right) = O\left(\begin{matrix}
  |z|^{\a} &  |z|^{-\a}\\
   |z|^{\a} & |z|^{-\a}
  \end{matrix}\right), \quad \im{z}>0,\\
  Q^{(0)}(z)\left(\begin{matrix}
  1 & 0\\
   -\frac{s_3 +i e^{-\pi i \a}}{1-s_1 s_3} & 1
  \end{matrix}\right) = O\left(\begin{matrix}
  |z|^{\a} &  |z|^{-\a}\\
   |z|^{\a} & |z|^{-\a}
  \end{matrix}\right), \quad \im{z}<0.
  \end{eqnarray}
\end{itemize}

In \cite{Dai:Hu2017}, we just obtained the existence of the solution of the above RH  problem by proving a vanishing lemma. Fortunately, due to the model RH problem for $M(\eta)$ and its solution given in \eqref{M-expression}, we can construct the parametrix at origin explicitly right now.

To construct $Q^{(0)}(z)$ with the use of $M(\eta)$, we define the following conformal mapping in $U(0, \delta)$:
\begin{equation}\label{map-z-eta}
\eta(z):= i \tilde{\theta}(z) = z - \frac{4}{3} z^3.
\end{equation}
Then the parametrix near origin is given by
\begin{equation} \label{p0z-defn}
Q^{(0)}(z)=E(z)M(t \eta(z)) e^{-it \eta(z) \sigma_3} \begin{cases}\D
 e^{-\pi i \nu \sigma_3}, & \im z >0,\\
 e^{\pi i \nu \sigma_3}, & \im z <0,
 \end{cases}
\end{equation}
with $E(z)$ analytic in $U(0, \delta)$ and defined by
\begin{equation}
E(z):=\left(\frac{z+1/2}{1/2 -z}\right)^{\nu \sigma_3}.
\end{equation}
Here we choose the branch such that $\arg(1/2\pm z)\in(-\pi, \pi)$, then $E(z)$ satisfies the following property:
 \begin{equation}
 E(z) = \left(\frac{z+1/2}{z-1/2}\right)^{\nu \sigma_3}
 \begin{cases}\D
 e^{\pi i \nu \sigma_3}, & \im z >0,\\
 e^{-\pi i \nu \sigma_3}, & \im z <0.
 \end{cases}
 \end{equation}
Combining with the model RH problem for $M$, we can easily check that $Q^{(0)}(z)$ defined in \eqref{p0z-defn} solves the RH problem stated at the beginning. As a consequence, we have finished the parametrix construction at the origin.

\subsection{Error Estimation}

Now we define an error function as follows:
\begin{equation}\label{error}
R(z)=\begin{cases}\D T(z)[(Q^{(r)})(z)]^{-1}, & \quad z \in U(z_+,\delta)\setminus \Sigma_T;\\
T(z)[(Q^{(l)})(z)]^{-1}, & \quad z \in U(z_-,\delta)\setminus \Sigma_T; \\
T(z)[(Q^{(o)})(z)]^{-1}, & \quad z \in U(0,\delta)\setminus \Sigma_T; \\
T(z) [N(z)]^{-1}, &\quad elsewhere.
\end{cases}
\end{equation}
Then, $R(z)$ satisfies the RH problem depicted in Figure \ref{contour(error)}:
\begin{itemize}
\item[(a)] $R(z)$ is analytic when $ z \in \mathbb{C}\cut \Sigma_R$; see Figure \ref{contour(error)}.
\begin{figure}[h]
  \centering
  \includegraphics[width=12cm]{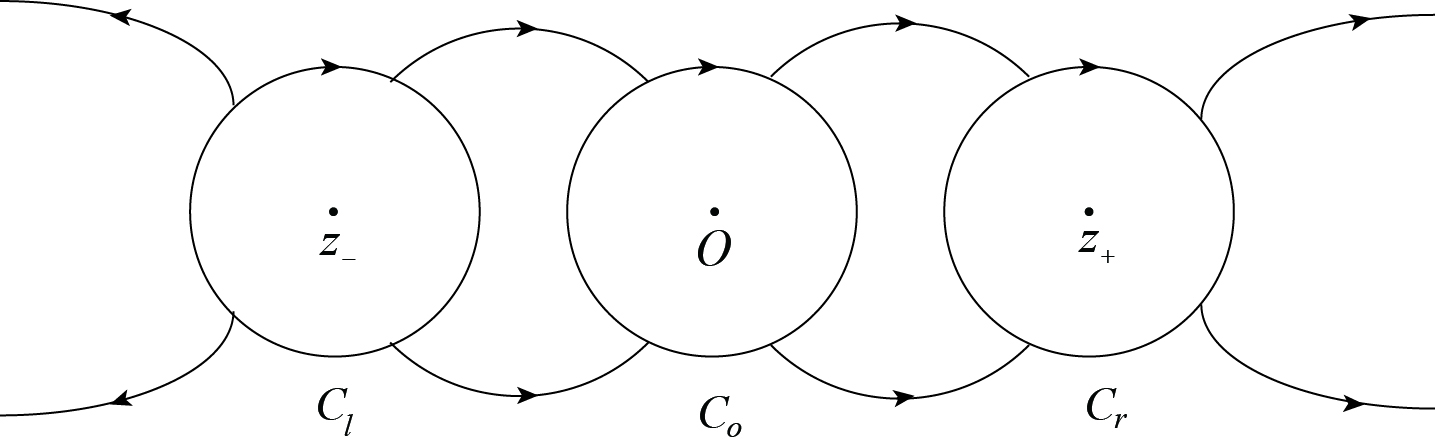}
  \caption{The contour $\Sigma_R$. Here $C_l$, $C_r$ and $C_0$ denote $\partial U(\{z_{\pm},0\}, \delta)$, respectively.}\label{contour(error)}
\end{figure}

\item[(b)] On $\Sigma_R$, $
R_{+}(z)=R_{-}(z) J_{R}(z),$
where
\begin{equation}\label{Jr-def}
J_{R}(z)=\begin{cases}\D Q^{(r)}(z) [N(z)]^{-1},& \quad z\in \partial {U}(z_{+}, \delta),\\
Q^{(l)}(z) [N(z)]^{-1},& \quad z\in \partial {U}(z_{-}, \delta),\\
Q^{(0)}(z)[N(z)]^{-1},& \quad z\in \partial {U}(0, \delta),\\
N(z)J_{T}(z) [N(z)]^{-1},& \quad z\in \Sigma_R \cut \{ \partial {U}(z_{\pm}, \delta) \cup \partial {U}(0, \delta) \}.
\end{cases}
\end{equation}
\item[(c)]$R(z)$ has two simple poles at $z_{\pm}=\pm\frac{1}{2}$. Moreover, if we let $R(z)=(R^{(1)}(z), R^{(2)}(z))$ with $R^{(1)}(z)$ and $R^{(2)}(z)$ denoting the corresponding columns of $R(z)$, then we have from \eqref{Qr-def}-\eqref{Qrl-realtion} and \eqref{error} that
\begin{eqnarray}
&&\mathop{\textrm{res}}\limits_{z=z_{+}}R^{(1)}(z) = R^{(2)}(z_{+})\left(-\frac{h_1}{s_3}\frac{e^{-\frac{2it}{3}}}{\tilde{\beta}(z_{+})}\right) \label{res-z+}\\
&&\mathop{\textrm{res}}\limits_{z=z_{-}}R^{(2)}(z) = R^{(1)}(z_{-})\left(-\frac{h_1}{s_3}\frac{e^{-\frac{2it}{3}}}{\tilde{\beta}(-z_{-})}\right)\label{res-z-}.
\end{eqnarray}
\item[(d)]As $z\to \infty$, $R(z)= I+O(1/z)$.
\item[(e)]As $z\to 0$, $R(z)$ is bounded.
\end{itemize}

It is easy to show that $\det R(z)\equiv 1$ via the Liouville's theorem. Consequently, the solution of the above RH problem, if it exists, is unique.

When $ t \to \infty$, from the matching conditions given in \eqref{psir-psid-match} and \eqref{psi0-psid-match} as well as the jump matrices $J_T(z)$ formulated in Figure \ref{contour(LD)}, we obtain
\begin{equation}\label{Jr-t-order}
J_{R}(z)=\begin{cases}\D
I+O(1/t), & \quad z \in \partial U(z_{\pm}, \delta)\cup \partial U(z_{0}, \delta),\\
I+O(e^{-c_1 t}), & \quad z\in \Sigma_R \cut \partial {U}(z_{\pm}, \delta) \cup \partial {U}(0, \delta),
\end{cases}
\end{equation}
with $c_1$ a positive constant.

However, since $R(z)$ has two simple poles at $z=z_{\pm}$, we can not find the integral equation which is equivalent to the RH problem of $R$ like the one in \cite{Dai:Hu2017}. Indeed, we can deal with this kind of RH problems by simplifying the problem to the one without poles via a certain ``dressing" procedure; see \cite{Bothner:Its2012}.

We put \begin{equation}\label{D-W-def}
R(z)=(z I + D)W(z)\left(\begin{matrix}
\frac{1}{z-z_{+}} & 0\\
0 & \frac{1}{z-z_{-}}
\end{matrix}\right),
\end{equation}
where $D \in \mathbb{C}^{2 \times 2}$  is constant to be determined. Then $W(z)$ satisfies the following RH problem:
\begin{itemize}
\item[(a)] $W(z)$ is analytic when $z\in \mathbb{C}\cut \Sigma_R$.
\item[(b)] On $\Sigma_R$, $W(z)$ satisfies the jump conditions:
\begin{equation}
W_{+}(z)=W_{-}(z)J_{W}(z), \quad z\in \Sigma_R,
\end{equation}
where
\begin{equation}
J_{W}(z)=\left(\begin{matrix}
\frac{1}{z-z_{+}} & 0\\
0 & \frac{1}{z-z_{-}}
\end{matrix}\right)J_{R}(z)\left(\begin{matrix}
z-z_{+} & 0\\
0 & z-z_{-}
\end{matrix}\right).
\end{equation}
\item[(c)] $W(z)$ is bounded at $z=z_{\pm}$.
\item[(d)] As $z\to \infty$, $W(z)$ satisfies the asymptotic behavior:
\begin{equation}
W(z)= I+O(1/z), \quad z\to \infty.
\end{equation}
\end{itemize}

From \eqref{Jr-t-order}, we obtain that the jump matrices $J_{W}(z)$ tend to unit matrix uniformly as $t\to \infty$ and they satisfy the same asymptotics as $J_{R}$ in \eqref{Jr-t-order}.
Therefore, the RH problem of $W$ is equivalent to the singular integral equation
\begin{equation}\label{W-Cau-int}
W_{-}(z)=I +\frac{1}{2\pi i}\int_{\Sigma_{R}}W_{-}(z')(J_{W}(z')-I)\frac{\textrm{d} z'}{z' -z}, \quad z \in \Sigma_{R}.
\end{equation}

According to the standard arguments with respect to the above Cauchy operator, we know that the above operator is contracting for sufficiently large $t$, which indicates that the equation \eqref{W-Cau-int} can be solved in $L^2(\Sigma_R)$ by iterations; see \cite{Deift1999,Dei:Kri:McL:Ven:Zhou1999-2}. Also, based on the above mentioned arguments, as $t \to \infty$, we have
\begin{equation}\label{est-W}
W(z) =I + O(1/t), \qquad \textrm{as } z \in \mathbb{C} \setminus \Sigma_R.
\end{equation}

Finally, we are left with the constant matrix $D$ to be determined. From the residue conditions \eqref{res-z+} and \eqref{res-z-}, we have
\begin{equation}
\mathop{\textrm{res}}_{z=z_{+}}R^{(1)}(z) =(z_{+}I+D) W^{(1)}(z_{+})=(z_{+}I+D) W^{(2)}(z_{+})\left(-\frac{h_1}{s_3}\frac{e^{-\frac{2it}{3}}}{\tilde{\beta}(z_{+})}\right),
\end{equation}
\begin{equation}
\mathop{\textrm{res}}_{z=z_{-}}R^{(2)}(z) = (z_{-}I+D) W^{(2)}(z_{-})=(z_{-}I+D) W^{(1)}(z_{-})\left(\frac{h_1}{s_3}\frac{e^{-\frac{2it}{3}}}{\tilde{\beta}(z_{+})}\right),
\end{equation}
which implies
\begin{equation}\label{D-expression}
D=\left(W(z_{+})\left(\begin{matrix}
1\\
p
\end{matrix}\right), W(z_{-})\left(\begin{matrix}
-p\\
1
\end{matrix}\right)\right)\left(\begin{matrix}
z_{-} & 0\\
0 & z_{+}
\end{matrix}\right)\left(W(z_{+})\left(\begin{matrix}
1\\
p
\end{matrix}\right), W(z_{-})\left(\begin{matrix}
-p\\
1
\end{matrix}\right)\right)^{-1}
\end{equation}
with
\begin{equation}\label{p-def}
 p=\frac{h_1}{s_3}\frac{e^{-\frac{2it}{3}}}{\tilde{\beta}(z_{+})}.
\end{equation}
From \eqref{W-Cau-int} and the definitions of the jumps in the RH problem of $W$, we obtain that
\begin{equation}\label{asy-W-z-pm}
W(z_{\pm}) = I +O(1/t),\quad t\to \infty.
\end{equation}
Hence, the left matrix in \eqref{D-expression} is invertible, and then the right matrix inverse exists, for sufficiently large $t$ except those such that
\begin{equation}\label{p-condition}
1+p^2 = 0.
\end{equation}

According to the definitions of the parameters $p$ in \eqref{p-def} and $h_1$ in \eqref{h1-def}, and the property of Gamma function that
\begin{equation}
\Gamma(\nu)\Gamma(1-\nu)=\frac{\pi}{\sin {\pi\nu}},
\end{equation}
we have $|p|=1$ and
\begin{equation}\label{p-arg}
p=-i\frac{s_1 \Gamma(\nu+1)}{\sqrt{2\pi(|s_1|^2 -1)}}e^{-\frac{2it}{3}-\nu_0 \ln 8t-\frac{\pi i \nu_0}{2}}=-ie^{-\frac{2it}{3}-\nu_0 \ln 8t + i \arg \Gamma(\nu + 1) + i \arg s_1}.
\end{equation}
Then \eqref{p-condition} yields a family of points $\{t_n\}$ by the following equation
\begin{equation}
\frac{2t_n}{3}+ \frac{\ln(|s_1|^2 - 1)}{2\pi}\ln(8t_n)-\arg\Gamma(\nu+1)-\arg s_1=n\pi, \quad n=0,1,2,...
\end{equation}
The above $\{t_n\}$ will finally lead to the appearance of the singularities in the asymptotics \eqref{asy-neg}.

\section{Proof of the main result}\label{sec-proof}
From now on, we will calculate the asymptotics \eqref{asy-neg} of $u(x;\a)$ by taking $t$ bounded away from the neighborhood of the points $t_n$ given in last section. Tracing back the transformation $W \to R \to T \to U \to \Psi_\a$ and using the connection that $u(x;\alpha)=2\biggl(\Psi_{1}(x)\biggl)_{12} $, an expression of $u(x;\a)$ in terms of the integral equation \eqref{W-Cau-int} is obtained as follows:
\begin{equation}
u(x;\a)=2\sqrt{-x}D_{12}-\frac{\sqrt{-x}}{\pi i} \int_{\Sigma_R} \biggl(W_{-}(z')\left(J_W(z')-I\right) \biggr)_{12} \textrm{d}z'.
\end{equation}
Then neglecting the exponentially small contributions on $\Sigma_R \cut \partial {U}(z_{\pm}, \delta) \cup \partial {U}(0, \delta)$ and combining \eqref{est-W} and \eqref{Jr-t-order} yield that,
\begin{eqnarray}\label{u-D-relation}
u(x;\a)&=&2\sqrt{-x}D_{12}-\frac{\sqrt{-x}}{\pi i} \int_{C_l \cup C_o \cup C_r} (J_{W}(z))_{12} \textrm{d}z' +O((-x)^{-5/2})\nonumber \\
&=& 2\sqrt{-x}D_{12} +O((-x)^{-1}),\qquad \textrm{as}\,\, x\to -\infty.
\end{eqnarray}

Now we only need to compute the term $D_{12}$. From \eqref{D-expression}, combing with \eqref{W-Cau-int} and \eqref{est-W}, we derive that
\begin{equation}\label{D-12}
(D_{12})^{-1}=\frac{W_{22}(z_{-})-pW_{21}(z_{-})}{W_{12}(z_{-})-pW_{11}(z_{-})}-\frac{W_{21}(z_{+})+pW_{22}(z_{+})}{W_{11}(z_{+})+pW_{12}(z_{+})},
\end{equation}
with $p$ be defined in \eqref{p-def}. As $x\to -\infty$, the behavior of $W(z)$ in \eqref{asy-W-z-pm} gives us that
\begin{equation}
W_{11}(z_{\pm}) = 1+O((-x)^{-\frac{3}{2}}), \quad W_{22}(z_{\pm}) = 1+O((-x)^{-\frac{3}{2}})
\end{equation}
and
\begin{equation}
W_{12}(z_{\pm}) = O((-x)^{-\frac{3}{2}}), \quad W_{21}(z_{\pm}) = O((-x)^{-\frac{3}{2}}).
\end{equation}
The above three formulas imply that
\begin{equation}\label{D12-p-relation}
(D_{12})^{-1}= -2\re{p} + O((-x)^{-\frac{3}{2}}), \qquad \textrm{as}\,\,  x\to -\infty.
\end{equation}
Then taking \eqref{p-arg} into account, \eqref{u-D-relation} turns into
\begin{equation}
u(x;\a)=\frac{\sqrt{-x}}{\sin\{\frac{2}{3}(-x)^{\frac{3}{2}}+\frac{3}{4}d^2\ln(-x)+\phi\}+O((-x)^{-\frac{3}{2}})}+O((-x)^{-1}),\qquad \textrm{as}\,\, x\to \infty,
\end{equation}
with
\begin{equation}
d = \frac{1}{\sqrt{\pi}}\sqrt{\ln(|s_1|^2-1)}
\end{equation}
and
\begin{equation}
\phi = \frac{3\ln 2}{2} d^2  -\arg \Gamma{\biggr(\frac{1}{2}+\frac{1}{2}id^2}\biggr)-\arg (s_1).
\end{equation}
Substituting the condition \eqref{stokes-real-as}, we deduce the asymptotics and connection formulas in Theorem \ref{main-thm}.

\section*{Acknowledgements}
 The author is grateful to Dan Dai for useful and stimulation discussions. The author also thanks anonymous referees for their valuable comments and suggestions, which
improved the article a lot.

The work is partially supported by the Research Grants Council of the Hong
Kong Special Administrative Region, China (Project No. CityU 11300115, CityU 11303016) and National Natural Science Foundation of China (Grant No. 11871345).

\end{document}